\begin{document}

\def\fl#1{\left\lfloor#1\right\rfloor}
\def\cl#1{\left\lceil#1\right\rceil}
\def\ang#1{\left\langle#1\right\rangle}
\def\stf#1#2{\left[#1\atop#2\right]} 
\def\sts#1#2{\left\{#1\atop#2\right\}}
\def\eul#1#2{\left\langle#1\atop#2\right\rangle}
\def\N{\mathbb N}
\def\Z{\mathbb Z}
\def\R{\mathbb R}
\def\C{\mathbb C}
\newcommand{\ctext}[1]{\raise0.2ex\hbox{\textcircled{\scriptsize{#1}}}}

\newtheorem{theorem}{Theorem}
\newtheorem{Prop}{Proposition}
\newtheorem{Cor}{Corollary}
\newtheorem{Lem}{Lemma}
\newtheorem{Def}{Definition}
\newtheorem{Conj}{Conjecture}

\newenvironment{Rem}{\begin{trivlist} \item[\hskip \labelsep{\it
Remark.}]\setlength{\parindent}{0pt}}{\end{trivlist}}

\title{$p$-numerical semigroups with $p$-symmetric properties
}

\author{
Takao Komatsu 
\\
\small Department of Mathematical Sciences, School of Science\\[-0.8ex]
\small Zhejiang Sci-Tech University\\[-0.8ex]
\small Hangzhou 310018 China\\[-0.8ex]
\small \texttt{komatsu@zstu.edu.cn}\\\\
Haotian Ying
\\ 
\small Department of Mathematical Sciences, School of Science\\[-0.8ex]
\small Zhejiang Sci-Tech University\\[-0.8ex]
\small Hangzhou 310018 China\\[-0.8ex]
\small \texttt
{tomyinght@gmail.com}  
}

\date{
\small MR Subject Classifications: 20M14, 11D07, 20M05, 05A15, 11B25 
}

\maketitle
 
\begin{abstract} 
The so-called Frobenius number in the famous linear Diophantine problem of Frobenius is the largest integer such that the linear equation $a_1 x_1+\cdots+a_k x_k=n$ ($a_1,\dots,a_k$ are given positive integers with $\gcd(a_1,\dots,a_k)=1$) does not have a non-negative integer solution $(x_1,\dots,x_k)$. The generalized Frobenius number (called the $p$-Frobenius number) is the largest integer such that this linear equation has at most $p$ solutions. That is, when $p=0$, the $0$-Frobenius number is the original Frobenius number. 

In this paper, we introduce and discuss $p$-numerical semigroups by developing a generalization of the theory of numerical semigroups based on this flow of the number of representations. That is, for a certain non-negative integer $p$, $p$-gaps, $p$-symmetric semigroups, $p$-pseudo-symmetric semigroups, and the like are defined, and their properties are obtained. When $p=0$, they correspond to the original gaps, symmetric semigroups, and pseudo-symmetric semigroups, respectively.  
\\
{\bf Keywords:} numerical semigroup, Frobenius number, genus, Ap\'ery set, pseudo-Frobenius number, symmetry    
      
\end{abstract}

\section{Introduction}  

Let $a_1,a_2,\dots,a_k$ ($k\ge 2$) be a sequence of positive integers with $\gcd(a_1,a_2,\dots,a_k)=1$. Then, there are integers that can be represented by linear combinations of non-negative integer coefficients by $a_1,a_2,\dots,a_k$, and integers that cannot be represented. Since it is easy to see that any sufficiently large integer can always be represented, there is always the largest integer that cannot be represented. This integer is called the Frobenius number, and the problem associated with this number is called the linear Diophantine problem of Frobenius. We denote it by $g(a_1,a_2,\dots,a_k)$. This problem has long been known and been captured attractively by many experts and amateur mathematicians as it is also known as the stamp exchange problem associated with daily life and can be considered as part of the more general Diophantine equations' problem.  
Currently, there are many topics on the linear Diophantine problem of Frobenius, but one of the most central problems is the explicit formula of the Frobenius number for given $a_1,a_2,\dots,a_k$. Indeed, in the case of two variables, the explicit formula of the Frobenius number was given by Sylvester at the end of the 19th century, but in the case of three or more variables, the general explicit formula has not been known. Moreover, the Frobenius number cannot be given by closed formulas of a certain type \cite{cu90}, the problem to determine the Frobenius number is NP-hard under Turing reduction (see, e.g., Ram\'irez Alfons\'in \cite{ra05}).  Nevertheless, some explicit forms have been discovered in some particular cases, including arithmetic, geometric-like, Fibonacci, Mersenne, and triangular and so on (see, e.g., \cite{RR18,RBT2015,RBT2016,RBT2017} and references therein).

As mentioned above, there are integers that can be represented by linear combinations of non-negative integer coefficients by $a_1,a_2,\dots,a_k$, and integers that cannot be represented, but for those integers can be represented, there may be two or more different representations. The number of these representations is the most natural and most rational generalization of the Frobenius number. Therefore, for a given non-negative integer $p$, we can consider the largest integer $g_p(a_1,a_2,\dots,a_k)$ such that the number of expressions that can be represented by $a_1,a_2,\dots,a_k$ is at most $p$ ways. We call it the generalized Frobenius number or $p$-Frobenius number (which is redefined from the viewpoint of numerical semigroups in the later section). That is, all integers larger than this number have the number of representations of $p+1$ or more ways. When $p=0$, this number is reduced to the original Frobenius number $g(a_1,a_2,\dots,a_k)=g_0(a_1,a_2,\dots,a_k)$. 
This generalized Frobenius number is also called the $k$-Frobenius number \cite{ADL16,BDFHKMRSS} or the $s$-Frobenius number \cite{FS11}. 

A slightly different but similarly directed definition of the generalized Frobenius number has been also studied. One can consider the largest integer $g_p^\ast(a_1,a_2,\dots,a_k)$ that has exactly $p$ distinct representations (see, e.g., \cite{BDFHKMRSS,FS11}). However, in this case, the ordering $g_0^\ast\le g_1^\ast\le \cdots$ may not hold. For example, $g_{17}^\ast(2,5,7)=43>g_{18}^\ast(2,5,7)=42$. In addition, for some $j$, $g_j^\ast$ may not exist. For example, $g_{22}^\ast(2,5,7)$ does not exist because there is no positive integer whose number of representations is exactly $22$. Therefore, we do not adopt this definition, as this shortcoming may have no problem in studies in other directions, but not in the study of explicit formulas, as well as in the study of numerical semigroups, which is the central topic of this paper.  

In \cite{BDFHKMRSS}, a generalization of the Frobenius problem was considered, where the object of interest is the greatest integer which has exactly $p$ representations by a collection of positive relatively prime integers. 
In \cite{FS11}, general upper and lower bounds were obtained on the generalized Frobenius number, that is, the largest positive integer that has precisely $p$ distinct representations.  
In \cite{ADL16}, it was proved that for fixed $k$ and $p$, the $p$-Frobenius number can be computed in polynomial time. 

It is easy to give an explicit form of $p$-Frobenius number when $k=2$. However, for $k\ge 3$ and $p\ge 1$, only bounds or algorithms on $p$-Frobenius number has been available. Even in the case of $p=0$, it is not easy to obtain the explicit formula of the Frobenius number for $k\ge 3$ even in a special case. As one can easily imagine, it becomes even more difficult when $p\ge 1$. As far as it is known, no explicit formula was known for $p>1$. Attempts to break this shell are one of the motivations to study the theory of numerical semigroups.     

Recently, in \cite{Ko-repunit,Ko-triangular}, with the help of the theory of numerical semigroups, we are finally successful to obtain explicit forms of $p$-Frobenius number of some special sequences. That is, we give the generalized Frobenius number when $a_j=(b^{n+j-1}-1)/(b-1)$ ($b\ge 2$) as a generalization of the result of $p=0$ in \cite{RBT2016}. Immediately continued in \cite{KY}, we obtained the $p$-Frobenius number for sequences of arithmetic progressions. The $p$-Frobenius numbers of Fibonacci \cite{KY23}, Jacobsthal \cite{KP} and generalized Jacobsthal triples \cite{KLP} have successfully been given just recently.   
The tools from the numerical semigroups we have used so far are still just a few of them. 
By applying the theory introduced and studied in this paper, it is expected to contribute to the linear Diophantine problem of Frobenius and the development to the field of algebraic geometry including the algebraic curve due to the numerical semigroups. That is expected to continue and can be taken into consideration in the near future. 

In this paper, in particular, we would define and study $p$-symmetric semigroups and $p$-pseudo-symmetric semigroups. In order to study numerical semigroups, it is important to study symmetric numerical semigroups. Firstly, every numerical semigroup can be expressed as a finite intersection of irreducible numerical semigroups while irreducible numerical semigroups are either symmetric or pseudo-symmetric. Secondly, the numerical semigroups associated to algebraic curves are free numerical semigroups, hence are symmetric semigroups. Thirdly, the symmetric semigroups correspond to Gorenstein rings. However, we find out that a symmetric semigroup does not keep symmetric in the $p$-case. But, we do find the property in the $p$-case which is similar to symmetry during the research. This motivates us to give the definition of $p$-symmetry. 
We give lots of examples which illustrate how the symmetric semigroups in $0$-case turn into $p$-symmetric semigroups in $p$-case. So, we guess that symmetric semigroups become $p$-symmetric in $p$-case for all symmetric numerical semigroups, hence $p$-symmetric semigroups play the same role in the $p$-case as symmetric semigroups do in the $0$-case.

\section{Preliminaries}

Denote the set of all non-negative integers by $\mathbb N_0$. Let $S$ be a submonoid of $\mathbb N_0$, that is, $S$ satisfies the conditions (i) $S\subseteq\mathbb N_0$; (ii) $0\in S$ (iii) $a,b\in S$ implies $a+b\in S$. Then $S$ is called a {\rm numerical semigroup} if and only if $G(S):=\mathbb N_0\backslash S$ is a finite set. The set $G(S)$ is called the set of {\it gaps} of $S$. The largest integer in $G(S)$ is called the {\it Frobenius number} of $S$ and denoted by $g(S)$.\footnote{Different notation has been used for this and other concepts, depending on the authors and their background. We mainly follow that in \cite{ra05}.} The number $g(S)+1$ is known as the {\it conductor} of $S$. The cardinality of $G(S)$ is called the {\it genus} or the {\it Sylvester number} of $S$ and denoted by $n(S)$. The sum of all the elements (gaps) of $G(S)$ is known as {\it Sylvester sum} of $S$. The numerical semigroup $S$ is called {\it symmetric} if $g(S)-x\in S$ for all $x\in\mathbb Z\backslash S$.      
When $S$ is a numerical semigroup and $A\subseteq S$, it is called that $S$ is generated by $A$ and denoted by $S=\ang{A}$ if for all $n\in S$, there exist $a_1,a_2,\dots,a_k\in A$ and $x_1,x_2,\dots,x_k\in\mathbb N_0$ such that $n=\sum_{j=1}^k a_j x_j$. $A$ is called a {\it minimal set of generators} of $S$ if $S=\ang{A}$ and no proper subset of $A$ has its property. $S=\ang{A}$ is called the {\it canonical form} description of $S$.  
The {\it Ap\'ery set} of $S$ with respect to $n\in S^\ast:=S\backslash\{0\}$, is defined as 
$$ 
{\rm Ape}(S;k)=\{s\in S|s-k\not\in S\}\,.\footnote{In numerical semigroups, the notation ${\rm Ap}(S;k)$ is more popular. However, since the $p$-Ap\'ery set is defined in a later section, it is a little unpleasant to write ${\rm Ap}_p(S;k)$, so this notation is used in this paper.}
$$ 
The Ap\'ery sets were introduced by Ap\'ery in \cite{Apery}. 
For all $j=0,1,\dots,n-1$, let $m_j$ be the least element of $S$ such that $m_j\equiv j\pmod{k}$. Then 
$$
{\rm Ape}(S;k)=\{m_0,m_1,\dots,m_{k-1}\}\,, 
$$ 
that is, ${\rm Ape}(S;k)$ is the complete residue set modulo $n$. 
The {\it Hilbert series} of $S$ is given by 
$$
H(S;x)=\sum_{s\in S}x^s\,. 
$$  
For numerical semigroups, we generally refer to \cite{ADG20,ra05,RG09}. 
In \cite{RR11}, semi-explicit formulas for the Frobenius number, the genus, and the set of pseudo-Frobenius numbers of a certain numerical semigroup $S$ are given. 

Let $a_1,a_2,\dots,a_k$ be positive integers with $\gcd(a_1,a_2,\dots,a_k)=1$. Then, $S=\ang{a_1,a_2,\dots,a_k}$ is a numerical semigroup. The Hilbert series is given by 
$$
H(S;x)=\frac{f(S;x)}{(1-x^{a_1})(1-x^{a_2})\cdots(1-x^{a_k})}\,, 
$$   
where $f(S;x)$ is the polynomial with integral coefficients. 
One of the most important and interesting topics is to find the {\it denumerant} $d(n)=d(n;a_1,a_2,\dots,a_k)$, which is the number of representations to $a_1 x_1+a_2 x_2+\dots+a_k x_k=n$ for a given positive integer $n$. This number is equal to the coefficient of $x^n$ in $1/(1-x^{a_1})(1-x^{a_2})\cdots(1-x^{a_k})$.   
Sylvester \cite{sy1857} and Cayley \cite{cayley} showed that $d(n;a_1,a_2,\dots,a_k)$ can be expressed as the sum of a polynomial of $n$ of degree $k-1$ and a periodic function of period $a_1 a_2\cdots a_k$.  
In \cite{bgk01}, the explicit formula for the polynomial part is derived by using Bernoulli numbers. 
For two variables, a formula for $d(n;a_1,a_2)$ is obtained in \cite{tr00}. For three variables in the pairwise coprime case $d(n;a_1,a_2,a_3)$, in \cite{ko03}, the periodic function part is expressed in terms of trigonometric functions. However, the calculation becomes very complicated for larger $a_1,a_2,a_3$. In \cite{bi20}, three variables case can be easily worked with in his formula using floor functions.   

In this paper, we discuss one of the most natural generalizations in the direction of the number of representations. Let $p$ be a non-negative integer.  
For the set $A=\{a_1,a_2,\dots,a_k\}$, put 
$$
G_p(A)=G_p(\ang{A}):=\{n\in\mathbb N_0|d(n;a_1,a_2,\dots,a_k)\le p\}
$$ 
and 
$$
S_p(A)=S_p(\ang{A})=\{n\in\mathbb N_0|d(n;a_1,a_2,\dots,a_k)>p\}\,,
$$ 
satisfying $G_p(A)+S_p(A)=\mathbb N_0$. We call $S_p(A)$ the {\it $p$-numerical semigroup}.\footnote{This name has often been introduced with different meanings. For example, in \cite{KimKom01}, a numerical semigroup with the least positive integer $n$ is called an $n$-semigroup.} If obvious, we simply write $S_p$ and $G_p$, respectively, When $p=0$, $S=S(A)=S_0(A)$ is a numerical semigroup, which is the set of all the representable non-negative integers in terms of $a_1,a_2,\dots,a_k$, that is, $S=\ang{a_1,a_2,\dots,a_k}$, and $G(A)=G(\ang{A})=G_0(A)$ is the set of gaps of $S$.  
Here, through this paper, unless otherwise specified, $A=\{a_1,a_2,\dots,a_k\}$ is the minimal set of generators with $\gcd(a_1,a_2,\dots,a_k)=1$. 
If there are extra elements in $A$, that is not the minimal set of generators, the result is irrelevant if $p=0$, but the result may change when $p>0$. 
$S_p(A)$ is said to be {\it $p$-generated} from $A$. 
For example, when $A=\{3,10,17\}$, we have  
\begin{align*}
S(A)&=\{3,6,9,10,12,13,15,\mapsto\}\,,\\
S_1(A)&=\{20,23,26,27,29,30,32,\mapsto\}\\
&=\langle{20, 23, 26, 27, 29, 30, \underbrace{32, 33, \dots, 39}\rangle}\,,\\
S_2(A)&=\{30,33,36,37,39,40,42,\mapsto\}\\
&=\langle{30, 33, 36, 37, 39, 40, \underbrace{42, 43, \dots, 59},61,62,64,65,68,71\rangle}\,,\\
S_3(A)&=\{40,43,46,47,49,\mapsto\}\\
&=\langle{40, 43, 46, 47, \underbrace{49, 50, \dots, 79},81,82,84,85,88\rangle}\,,\\
S_4(A)&=\{50,53,54,56,57,59, \mapsto\}\\
&=\langle{50, 53, 54, 56, 57, \underbrace{59, 60, \dots, 99},101,102,105\rangle}\,. 
\end{align*}
where the symbol $\mapsto$ means that all subsequent integers belong to its set.  
Here, the set $S_4(A)$ is $4$-generated from $\{3,10,17\}$, and generated from 
\begin{equation}
\{50, 53, 54, 56, 57, \underbrace{59, 60, \dots, 99},101,102,105\}\,. 
\label{eq:set-s4-3-10-17}
\end{equation} 
This implies that all the elements in $S_4(A)$ are represented in at least $5$ ways in terms of $\{3,10,17\}$ and all the elements in $G_4(A)$ are represented in at most $4$ ways in terms of $\{3,10,17\}$. In addition, all the elements in $S_4(A)$ are represented in at least $1$ way (representable) and all the elements in $G_4(A)$ are not representable in terms of (\ref{eq:set-s4-3-10-17}). 

Notice that strictly speaking, the $p$-numerical semigroup $S_p$ is not a semigroup because for $p>0$, $0\not\in S_p(A)$. $S_p$ is not a monoid, but may be embedded in a monoid formed by adjoining an element $0$, so that for all $s\in S_p^{(0)}(A):=S_p\cup\{0\}$, we have $0+s=s+0=s\in S_p\cup\{0\}$.  

\begin{Lem}  
$S_p^{(0)}(A)$ is a numerical semigroup.   
\label{lem:spanumsg}
\end{Lem}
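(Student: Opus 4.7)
The plan is to verify, in order, the three submonoid axioms together with the finiteness of $\mathbb{N}_0\setminus S_p^{(0)}(A)$. Conditions $S_p^{(0)}(A)\subseteq\mathbb{N}_0$ and $0\in S_p^{(0)}(A)$ hold by construction, so the real content lies in closure under addition and in finiteness of the complement.

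For closure, I would take $a,b\in S_p^{(0)}(A)$. The case where one summand equals $0$ is immediate from $0+s=s$, so I may assume $a,b\in S_p(A)$, which gives $d(a;A)>p$ and $d(b;A)\ge 1$. Fixing one representation $b=\sum_{i=1}^k a_i y_i$ and adding the tuple $(y_i)_{i=1}^k$ to each of the more than $p$ distinct representations $\bigl(x_i^{(j)}\bigr)_{i=1}^k$ of $a$ yields more than $p$ tuples $\bigl(x_i^{(j)}+y_i\bigr)_{i=1}^k$ representing $a+b$; these remain pairwise distinct because translation by a fixed tuple in $\mathbb{N}_0^k$ is injective. Hence $d(a+b;A)\ge d(a;A)>p$ and $a+b\in S_p(A)\subseteq S_p^{(0)}(A)$.

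For finiteness, I would observe that $\mathbb{N}_0\setminus S_p^{(0)}(A)\subseteq G_p(A)=\{n\in\mathbb{N}_0:d(n;A)\le p\}$ and invoke the Sylvester--Cayley description of the denumerant recalled in the preliminaries: $d(n;a_1,\dots,a_k)$ is the sum of a polynomial in $n$ of degree $k-1$ and a periodic function of period $a_1a_2\cdots a_k$. Because the leading coefficient $1/\bigl((k-1)!\,a_1\cdots a_k\bigr)$ is strictly positive and $k\ge 2$, we have $d(n;A)\to\infty$ as $n\to\infty$, so only finitely many $n$ can satisfy $d(n;A)\le p$.

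The only genuinely substantive point is the distinctness check in the closure argument, where one must ensure that different representations of $a$ give rise to different representations of $a+b$ after translation by a fixed representation of $b$. Everything else reduces to bookkeeping against the definitions and to the asymptotic growth of the denumerant.
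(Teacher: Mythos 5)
Your proof is correct, and its closure step is essentially the paper's own argument: the paper fixes a single representation of one summand and adds it to the $p+1$ representations of the other, which is exactly your translation argument (you merely make explicit the injectivity of translation by a fixed tuple and the trivial case of a zero summand, both of which the paper leaves implicit). Where you genuinely diverge is the finiteness of the complement. The paper asserts an explicit threshold --- every $m>p\,a_1\cdots a_k$ lies in $S_p^{(0)}(A)$, so that the number of $p$-gaps is at most $p\,a_1\cdots a_k$ --- whereas you invoke the Sylvester--Cayley expansion of the denumerant recalled in the preliminaries: its polynomial part has degree $k-1\ge 1$ with positive leading coefficient, so $d(n;A)\to\infty$ and $\{n\in\mathbb{N}_0 : d(n;A)\le p\}$ is finite. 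Your route leans on a nontrivial quoted theorem and yields no effective bound on the number of gaps, but it is airtight; the paper's route is elementary and effective, though its stated threshold is given without proof and in fact needs a slight adjustment (for $A=\{2,3\}$ and $p=1$ one has $d(7;2,3)=1$ although $7>p\,a_1a_2=6$; indeed $g_p(a,b)=(p+1)ab-a-b$, so a safe threshold is $(p+1)a_1\cdots a_k$ rather than $p\,a_1\cdots a_k$). Either way, the lemma holds, and your argument is a valid, if non-effective, alternative for the finiteness part.
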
 
\begin{proof} 
It is obvious that $S_p^{(0)}(A)\subseteq\mathbb N_0$ and $0\in S_p^{(0)}(A)$. 
Assume that $\alpha,\beta\in S_p^{(0)}(A)$. Then $\alpha$ and $\beta$ have at least $p+1$ representations in terms of $a_1,\dots,a_k$: 
\begin{align*}  
\alpha&=e_1 a_1+\dots+e_k a_k\,,\\
\beta&=e_{1,1}a_1+\dots+e_{1, k}a_k\\ 
&=e_{2,1}a_1+\dots+e_{2, k}a_k\\ 
\cdots\\
&=e_{p+1,1}a_1+\dots+e_{p+1,k}a_k\,. 
\end{align*} 
Hence, $\alpha+\beta$ has also at least $p+1$ representations: 
\begin{align*}  
\alpha+\beta&=(e_1+e_{1,1})a_1+\dots+(e_k+e_{1, k})a_k\\ 
&=(e_1+e_{2,1})a_1+\dots+(e_k+e_{2, k})a_k\\ 
&\cdots\\
&=(e_1+e_{p+1,1})a_1+\dots+((e_k+e_{p+1,k})a_k\,. 
\end{align*}
Thus, $\alpha+\beta\in S_p^{(0)}(A)$. 
Notice that if $m>p a_1\cdots a_k$, then $m\in S_p^{(0)}(A)$. So, $\#(\mathbb N_0\backslash S_p^{(0)}(A))\le p a_1\cdots a_k$, which is finite.  
Therefore, $S_p^{(0)}(A)$ is a numerical semigroup.  
\end{proof} 

The least element of $S_p(A)$ ($p>0$) or the least nonzero element of $S_p^{(0)}(A)$ is called the $p$-{\it multiplicity} and denoted by $m_p(A):=\min_{n\in S_p(A)}n$.  
Then, $S_p(A)$ is finitely generated and has a minimal generator set (see, e.g., \cite[Proposition 3]{ADG20}). $S_p(A)$ has a minimal generator set $A=\{a_1,\dots,a_k\}$ if and only if all the elements in $S_p(A)$ are represented in at least $p+1$ ways in terms of $a_1,\dots,a_k$ and none of $a_1,\dots,a_k$ can be dropped off. 
The cardinality of a minimal set of generators of $S_p(A)$ is called the $p$-embedding dimension and denoted by $e_p(A):=e_p\bigl(S_p(A)\bigr)$. 
Let $e_p^{(0)}(A)=e(S_p^{(0)}(A))$ and $t_p^{(0)}(A)=t(S_p^{(0)}(A))$. Thus, we have  
$e_p^{(0)}(A)\le m_p^{(0)}(A)$, $t_p^{(0)}(A)\le m_p^{(0)}(A)-1$, $c_p^{(0)}(A)\le(t_p^{(0)}(A)+1)n_p(A)$ and $s_p^{(0)}(A)\le t_p^{(0)}(A)n_p(A)$.

\section{$p$-Frobenius numbers} 

For a non-negative integer $p$, the $p$-Frobenius number and the $p$-Sylvester number ($p$-genus) are denoted by $g_p\bigl(A\bigr)=g\bigl(S_p(A)\bigr)=\max\{n\in G_p(A)\}$ and $n_p\bigl(A\bigr)=n\bigl(S_p(A)\bigr)=\#\{n\in G_p(A)\}$, respectively\footnote{It seems that more authors studying numerical semigroups use the notation $F(A)$ and $g(A)$ to denote the Frobenius number and genus, respectively. We use different notation from different research backgrounds and continuations of previous research.}.  In addition, the $p$-Sylvester sum is denoted by $s_p(A)=\sum_{n\in G_p(A)}n$. When $p=0$, $g(A)=g_0(A)$, $n(A)=n_0(A)$ and $s(A)=s_0(A)$ are the original Frobenius number, Sylvester number (genus) and Sylvester sum, respectively.  

\begin{Lem}
For $p\ge 0$, we have 
$$
n_p(A)\ge\frac{g_p(A)+1}{2}\,. 
$$
\label{lem:n>g12} 
\end{Lem}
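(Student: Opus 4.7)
The plan is to mimic the classical pairing argument that shows $n(S) \ge (g(S)+1)/2$ for an ordinary numerical semigroup, and to verify that it still goes through once ``representable'' is replaced by ``having at least $p+1$ representations.''

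The engine of the argument is the following absorption property, which is implicit in the proof of Lemma~\ref{lem:spanumsg}: if $d(x;a_1,\dots,a_k)\ge p+1$ and $d(y;a_1,\dots,a_k)\ge 1$, then $d(x+y;a_1,\dots,a_k)\ge p+1$. Indeed, fix any one representation of $y$ and add it coordinatewise to each of the $\ge p+1$ distinct representations of $x$; this yields $\ge p+1$ pairwise distinct representations of $x+y$.

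Next I would exploit this to pair up the integers $0,1,\dots,g_p(A)$ around $g_p(A)/2$. Concretely, I claim that for every $x$ with $0\le x\le g_p(A)$, at least one of $x$ and $g_p(A)-x$ lies in $G_p(A)$. Suppose toward contradiction that both are in $S_p(A)$; then $d(x;a_1,\dots,a_k)\ge p+1$ and $d(g_p(A)-x;a_1,\dots,a_k)\ge p+1\ge 1$, so the absorption property gives $d(g_p(A);a_1,\dots,a_k)\ge p+1$, contradicting $g_p(A)\in G_p(A)$. (The borderline cases $x=0$ and $x=g_p(A)$ cause no trouble: for $p=0$ we have $0\notin G_0(A)$ but $g_0(A)\in G_0(A)$, and for $p\ge 1$ we have $0\in G_p(A)$, so at least one member of the pair is a $p$-gap in either case.)

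Finally I would count. Partition $\{0,1,\dots,g_p(A)\}$, which has $g_p(A)+1$ elements, into the pairs $\{x,g_p(A)-x\}$ (singletons only when $x=g_p(A)-x$). Each pair contributes at least one element of $G_p(A)$ by the previous step, so
\[
n_p(A)=\#G_p(A)\ge\left\lceil\frac{g_p(A)+1}{2}\right\rceil\ge\frac{g_p(A)+1}{2}.
\]
There is no real obstacle here; the only step that requires a moment's thought is the absorption property, which is essentially the $\alpha+\beta$ computation already carried out in Lemma~\ref{lem:spanumsg}.
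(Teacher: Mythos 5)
Your proof is correct and follows essentially the same route as the paper: both rest on the reflection $x\mapsto g_p(A)-x$ on $\{0,1,\dots,g_p(A)\}$ together with the observation that $x\in S_p(A)$ and $g_p(A)-x\in S_p(A)$ would force $d\bigl(g_p(A)\bigr)\ge p+1$, contradicting $g_p(A)\in G_p(A)$. The only difference is cosmetic: you count gaps pair by pair and spell out the absorption property explicitly, while the paper injects $\{s\in S_p\,|\,s<g_p(A)\}$ into $G_p(A)$ and leaves that closure step implicit.
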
 
\begin{proof}  
For a non-negative integer $s$, if $s\in S_p$, then $g_p(A)-s\not\in S_p$. Hence, by $n_p(A)\ge\#\{s\in S_p|s<g_p(A)\}=g_p(A)+1-n_p(A)$, we get the result. 
\end{proof}

\begin{Prop}  
Assume that $S_p(A)$ is minimally generated by $a_1,\dots,a_k$. Set $d=\gcd(a_2,\dots,a_{k})$ and $T_p(A)=\{n\in\mathbb N_0|d(n;a_1,a_2/d,\dots,a_k/d)>p\}$ Then we have ${\rm Ape}(S_p,a_1)=d{\rm Ape}(T_p,a_1)$.  
\label{prp:gcd-ape}
\end{Prop}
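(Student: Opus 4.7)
The plan is to prove the identity by double inclusion, leveraging a pair of counting bijections between representations over $\{a_1,a_2,\dots,a_k\}$ and over $\{a_1,b_2,\dots,b_k\}$, where $b_j:=a_j/d$. First I would record two preliminary facts. Since $d\mid a_j$ for every $j\ge 2$ and $\gcd(a_1,\dots,a_k)=1$, we have $\gcd(a_1,d)=1$ and $\gcd(a_1,b_2,\dots,b_k)=1$, so $T_p$ is a bona fide $p$-numerical semigroup. Second, the injection $(x_1,x_2,\dots,x_k)\mapsto(x_1+1,x_2,\dots,x_k)$ of representations of $n$ into those of $n+a_1$ shows that the denumerant is non-decreasing under the shift $n\mapsto n+a_1$; consequently $\mathrm{Ape}(S_p,a_1)$ is exactly the set of minima of $S_p$ in each residue class modulo $a_1$, and likewise for $\mathrm{Ape}(T_p,a_1)$.

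The core step is to establish the two counting identities (with the convention $d(m;\cdots):=0$ when $m<0$)
\begin{align*}
d(dv;a_1,a_2,\dots,a_k) &= d(v;a_1,b_2,\dots,b_k),\\
d(dv-a_1;a_1,a_2,\dots,a_k) &= d(v-a_1;a_1,b_2,\dots,b_k).
\end{align*}
For the first, a representation $dv=a_1x_1+\sum_{j\ge 2}a_jx_j=a_1x_1+d\sum_{j\ge 2}b_jx_j$ forces $d\mid a_1x_1$ and hence $d\mid x_1$; writing $x_1=dy_1$ produces the corresponding $v=a_1y_1+\sum_{j\ge 2}b_jx_j$, and the map is patently reversible. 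For the second, the same congruence analysis applied to $dv-a_1$ gives $x_1+1\equiv 0\pmod d$, so $x_1=dy_1+(d-1)$ for a unique $y_1\ge 0$, which produces $v-a_1=a_1y_1+\sum_{j\ge 2}b_jx_j$.

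Once these identities are available, both inclusions follow immediately. For the $\supseteq$ direction, if $v\in\mathrm{Ape}(T_p,a_1)$ then the two identities yield $d(dv;a_1,\dots,a_k)>p$ and $d(dv-a_1;a_1,\dots,a_k)\le p$, so $dv\in\mathrm{Ape}(S_p,a_1)$. For the $\subseteq$ direction, given $w\in\mathrm{Ape}(S_p,a_1)$, the decomposition
$$d(w;a_1,\dots,a_k)=d(w-a_1;a_1,\dots,a_k)+\#\{\text{representations of }w\text{ with }x_1=0\}$$
combined with $d(w-a_1)\le p<d(w)$ forces some representation of $w$ with $x_1=0$; this gives $w=\sum_{j\ge 2}a_jx_j$ divisible by $d$, and writing $w=dv$ and reading the two identities in reverse places $v$ in $\mathrm{Ape}(T_p,a_1)$. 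The main care will lie in verifying the second identity in the boundary regime where $v<a_1$ (so $v-a_1<0$) yet $dv-a_1\ge 0$: there the constraint $x_1\ge d-1$ combined with $a_1x_1\le dv-a_1<a_1(d-1)$ is inconsistent, so both sides of the identity vanish and the bijection holds vacuously.
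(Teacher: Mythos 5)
Your argument is correct, and its core mechanism is a genuinely cleaner key lemma than the one the paper uses. The paper proves the two inclusions membership-wise: it asserts that an Ap\'ery element $w$ of $S_p$ lies in $\ang{a_2,\dots,a_k}$, divides by $d$, and then handles the condition ``$w-a_1\not\in$ (semigroup)'' by a contradiction argument in which a representation of $dw-a_1$ is divided through by $d$ using $d\mid(y_1+1)$. You instead prove the exact denumerant identities $d(dv;a_1,a_2,\dots,a_k)=d(v;a_1,a_2/d,\dots,a_k/d)$ and $d(dv-a_1;a_1,\dots,a_k)=d(v-a_1;a_1,a_2/d,\dots,a_k/d)$ via the bijections $x_1=dy_1$ and $x_1=dy_1+(d-1)$, and then both Ap\'ery conditions ($>p$ representations for the element, $\le p$ for the element minus $a_1$) transfer in one stroke. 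This buys you two things the paper's sketch leaves implicit or imprecise for $p>0$: first, your counting decomposition $d(w)=d(w-a_1)+\#\{x_1=0\}$ is exactly the justification needed for the paper's unproved claim that Ap\'ery elements of $S_p$ lie in $\ang{a_2,\dots,a_k}$; second, the paper's step ``$dw\in\ang{a_2,\dots,a_k}\subseteq S_p$'' is only literally valid when $p=0$ (mere representability does not give more than $p$ representations), whereas your first identity gives $d(dw)>p$ directly. Your treatment of the boundary case $v<a_1$ with $dv-a_1\ge 0$ is also a point the paper does not address. The only ingredient you should state explicitly rather than in passing is $\gcd(a_1,d)=1$, which follows from $\gcd(a_1,\dots,a_k)=1$ and which both congruence steps rely on; with that noted, the proof is complete.
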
 
\begin{proof}  
From the definition of the Ap\'ery set, $w\in{\rm Ape}(S_p,a_1)$ implies that $w-a_1\not\in S_p$. Since $w\in\ang{a_2,\dots,a_k}$, we have $w/d\in\ang{a_2/d,\dots,a_k/d}$. If $w/d-a_1\in T_p(A)$, as $w-d a_1\not\in S_p$, $w/d-a_1\not\in T_p(A)$. Hence, $w/d\in{\rm Ape}(S_p,a_1)$, which implies that $w\in d{\rm Ape}(S_p,a_1)$. 

On the other hand, if $w\in{\rm Ape}(T_p,a_1)$, then $w\in\ang{a_2/d,\dots,a_k/d}$, implying that $d w\in\ang{a_2,\dots,a_k}\subseteq S_p$. We shall see that $d w-a_1\not\in S_p(A)$, entailing that $d w\in S_p(A)$. Otherwise, for non-negative integers $y_1,\dots,y_k$, $d w-a_1=a_1 y_1+\cdots+a_k y_k$, implying that $w=a_1(y_1+1)/d+(a_2/d)y_2+\cdots+(a_k/d)y_k$ and $d|(y_1+1)$. But this is impossible because $w-a_1\not\in T_p(A)$.  
\end{proof}

By Proposition \ref{prp:gcd-ape}, we can obtain the relations between the $p$-Frobenius numbers $g_p(A)$ and the $p$-Sylvester numbers $n_p(A)$.  For simplicity, we write $g_p\bigl(A_d\bigr)=g\bigl(T_p(A)\bigr)$ and $n_p\bigl(A_d\bigr)=n\bigl(T_p(A)\bigr)$. 

\begin{Cor}  
As the same setting as above, we have 
\begin{enumerate}
\item[\rm (i)] $g_p(A)=d g_p(A_d)+(d-1)a_1$. 
\item[\rm (ii)] $\displaystyle n_p(A)=d n_p(A_d)+\dfrac{(d-1)(a_1-1)}{2}$. 
\item[\rm (iii)] $\displaystyle s_p(A)=d^2 s_p(A_d)+\dfrac{a_1 d(d-1)}{2}n_p(A_d)$$+\dfrac{(a_1-1)(d-1)(2 a_1 d-a_1-d-1)}{2}$. 
\end{enumerate} 
\label{cor:gcd-ape}
\end{Cor}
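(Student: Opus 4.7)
The plan is to apply Selmer--Brown--Shiue-type formulas expressing $g$, $n$ and $s$ in terms of the Ap\'ery set with respect to $a_1$, and then substitute the identity from Proposition \ref{prp:gcd-ape}. The key structural observation is that $w\in S_p$ implies $w+a_1\in S_p$ (each of the $p+1$ representations of $w$ extends to one of $w+a_1$ by increasing the $a_1$-coefficient), so for every residue $r$ modulo $a_1$ the elements of $S_p$ with that residue form an arithmetic progression $m_r, m_r+a_1, m_r+2a_1,\dots$. Hence ${\rm Ape}(S_p,a_1)=\{m_0,m_1,\dots,m_{a_1-1}\}$ is a complete system of minimal residue representatives, and the gaps in $G_p(A)$ with residue $r$ are exactly $r,r+a_1,\dots,m_r-a_1$, contributing $(m_r-r)/a_1$ integers whose sum is $(m_r^2-a_1m_r-r^2+a_1r)/(2a_1)$.

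Summing over $r$ and using $\sum_{r=0}^{a_1-1}r=a_1(a_1-1)/2$ and $\sum_{r=0}^{a_1-1}r^2=a_1(a_1-1)(2a_1-1)/6$, one obtains
\begin{align*}
g_p(A) &= \max_{w\in{\rm Ape}(S_p,a_1)} w - a_1,\\
n_p(A) &= \frac{1}{a_1}\sum_{w} w - \frac{a_1-1}{2},\\
s_p(A) &= \frac{1}{2a_1}\sum_{w} w^2 - \frac{1}{2}\sum_{w} w + \frac{a_1^2-1}{12},
\end{align*}
together with the analogous identities for $T_p(A)$ in terms of ${\rm Ape}(T_p,a_1)$. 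By Proposition \ref{prp:gcd-ape}, the map $w'\mapsto dw'$ is a bijection ${\rm Ape}(T_p,a_1)\to{\rm Ape}(S_p,a_1)$, so $\sum_{w} w = d\sum_{w'}w'$ and $\sum_{w} w^2 = d^2\sum_{w'}(w')^2$.

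Part (i) is then immediate: $g_p(A)=d(\max w')-a_1=d(g_p(A_d)+a_1)-a_1=d\,g_p(A_d)+(d-1)a_1$. For (ii), substitution gives $n_p(A)=(d/a_1)\sum w'-(a_1-1)/2$, and writing $(1/a_1)\sum w'=n_p(A_d)+(a_1-1)/2$ yields the claim. For (iii), substitution gives $s_p(A)=(d^2/(2a_1))\sum(w')^2-(d/2)\sum w'+(a_1^2-1)/12$; solving the $T_p$-versions of the $n$- and $s$-formulas for $\sum w'$ and $\sum(w')^2$, substituting, and collecting produces the $d^2 s_p(A_d)$ and $(a_1d(d-1)/2)\,n_p(A_d)$ pieces together with a remainder polynomial in $a_1$ and $d$ which factors as stated. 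The only real obstacle is this final algebraic simplification in (iii): three quadratic terms in $(a_1,d)$ have to be combined and the correct $(a_1-1)(d-1)(2a_1d-a_1-d-1)$ factor extracted. The specialisations $d=1$ (all correction terms must vanish) and $p=0$ (recovering the classical Selmer and Brown--Shiue identities for numerical semigroups) serve as convenient sanity checks.
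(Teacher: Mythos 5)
You take the same route as the paper: express $g_p$, $n_p$, $s_p$ through the Ap\'ery set with respect to $a_1$ (these are exactly (\ref{mp-g})--(\ref{mp-s}) of Corollary \ref{cor-mp}, which your residue-class argument rederives correctly) and then substitute the dilation ${\rm Ape}(S_p,a_1)=d\,{\rm Ape}(T_p,a_1)$ of Proposition \ref{prp:gcd-ape}. Parts (i) and (ii) are complete and correct as you give them.

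The gap is in (iii), where you call the remaining algebra ``the only real obstacle'' and then simply assert that the remainder factors as stated. That computation is the entire content of (iii), and carrying it out does not give the printed constant term. Substituting $w=dw'$ gives $s_p(A)=\frac{d^2}{2a_1}\sum(w')^2-\frac{d}{2}\sum w'+\frac{a_1^2-1}{12}=d^2s_p(A_d)+\frac{d(d-1)}{2}\sum w'-\frac{(d^2-1)(a_1^2-1)}{12}$, and inserting $\sum w'=a_1n_p(A_d)+\frac{a_1(a_1-1)}{2}$ yields
$$
s_p(A)=d^2 s_p(A_d)+\frac{a_1 d(d-1)}{2}\,n_p(A_d)+\frac{(a_1-1)(d-1)(2a_1d-a_1-d-1)}{12}\,,
$$
with denominator $12$, not $2$. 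The $2$ in the statement (and in the last line of the paper's own proof) is evidently a misprint: Example 1 of the paper uses the $/12$ version numerically, and the $p=0$, $k=2$ specialisation (\ref{eq:s2}), i.e.\ Sylvester's classical sum --- precisely the sanity check you proposed but never ran --- also forces $12$. So your method is the paper's method and would succeed, but by skipping the one step that carries the content you end up endorsing an incorrect coefficient; a complete proof must exhibit this simplification (and would then flag the misprint).
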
  
\begin{proof}  
We shall prove (iii). By Corollary \ref{cor-mp} (\ref{mp-n}) and (\ref{mp-s}), 
\begin{align*}
s_p(A)&=\frac{1}{2 a_1}\sum_{w\in{\rm Ape}_p(S;a_1)}w^2-\frac{1}{2}\sum_{w\in{\rm Ape}_p(S;a_1)}w+\frac{a_1^2-1}{12}\\
&=\frac{d^2}{2 a_1}\sum_{w\in{\rm Ape}_p(T;a_1)}w^2-\frac{d}{2}\sum_{w\in{\rm Ape}_p(T;a_1)}w+\frac{a_1^2-1}{12}\\
&=d^2\left(\frac{1}{2 a_1}\sum_{w\in{\rm Ape}_p(T;a_1)}w^2-\frac{1}{2}\sum_{w\in{\rm Ape}_p(T;a_1)}w+\frac{a_1^2-1}{12}\right)\\
&\quad -\frac{d}{2}\sum_{w\in{\rm Ape}_p(T;a_1)}w+\frac{a_1^2-1}{12}\\
&=d^2 s_p(A_d)+\dfrac{a_1 d(d-1)}{2}n_p(A_d)+\frac{(a_1-1)(d-1)(2 a_1 d-a_1-d-1)}{2}\,. 
\end{align*}
\end{proof} 

\noindent 
{\bf Example 1.}  
Let $S=\ang{20,30,17}$ and $T=\ang{2,3,17}=\ang{2,3}$ with $d=10$. Then, for $p=3$, by $s_3(A_{10})=136$ and $n_3(A_{10})=17$, we get $s_3(A)=10^2 s_3(A_{10})+17\cdot 10\cdot 9/2 n_3(A_{10})+16\cdot 9(2\cdot 17\cdot 10-17-10-1)/12=30349$.  

\noindent 
{\bf Example 2.}  
When $S=\ang{a,b}$, by putting $d=b$, we get 
$$
g_p(A_d)=a p-1,\quad n_p(A_d)=a p\quad\hbox{and}\quad g_p(A_d)=\frac{a^2 p^2-a p}{2}\,. 
$$ 
Therefore, we have 
\begin{align}  
g_p(a,b)&=(p+1)a b-a-b\,,
\label{eq:g2}\\
n_p(a,b)&=p a b+\frac{(a-1)(b-1)}{2}\,,
\label{eq:n2}\\
s_p(a,b)&=\frac{p^2 a^2 b^2}{2}+\frac{p(a b-a-b)a b}{2}+\frac{(a-1)(b-1)(2 a b-a-b-1)}{12}\,.\label{eq:s2} 
\end{align}

\subsection{$p$-Ap\'ery set} 

For a non-negative integer $p$, 
put ${\rm Ape}_p(A;a)={\rm Ape}_p(\ang{A};a):={\rm Ape}\bigl(S_p(A);a\bigr)$. For all $j=0,1,\dots,n-1$, let $m_j^{(p)}$ satisfy the conditions 
$$
{\rm (i)}\, m_j^{(p)}\equiv j\pmod{a},\quad {\rm (ii)}\, m_j^{(p)}\in S_p(A),\quad {\rm (iii)}\, m_j^{(p)}-a\not\in S_p(A)\,. 
$$ 
Then, the $p$-Ap\'ery set (\cite{ko22}) is given by 
$$
{\rm Ape}_p(A;a)=\{m_0^{(p)},m_1^{(p)},\dots,m_{a-1}^{(p)}\}\,. 
$$ 
Each element $m_j^{(p)}$ of the $p$-Ap\'ery set is useful to obtain the following formula.  

\begin{Prop} 
Let $k$, $p$ and $\mu$ be integers with $k\ge 2$, $p\ge 0$ and $\mu\ge 1$.  
Assume that $\gcd(a_1,a_2,\dots,a_k)=1$ and $a_1=\min(A)$.  We have 
\begin{align*} 
&s_p^{(\mu)}(A):=\sum_{d(n)\le p}n^\mu\\ 
&=\frac{1}{\mu+1}\sum_{\kappa=0}^{\mu}\binom{\mu+1}{\kappa}B_{\kappa}a_1^{\kappa-1}\sum_{i=0}^{a_1-1}\bigl(m_i^{(p)}\bigr)^{\mu+1-\kappa} 
+\frac{B_{\mu+1}}{\mu+1}(a_1^{\mu+1}-1)\,, 
\end{align*} 
where $B_n$ are Bernoulli numbers defined by 
$$
\frac{x}{e^x-1}=\sum_{n=0}^\infty B_n\frac{x^n}{n!}\,. 
$$ 
\label{prp-mp}
\end{Prop}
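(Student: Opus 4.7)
My plan is to partition $G_p(A)$ by residue classes modulo $a_1$, identify the gaps in each class explicitly using the $p$-Apéry elements, and then evaluate the resulting power sum via Bernoulli polynomial identities.

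First I would establish the structural lemma that in each residue class $j\in\{0,1,\dots,a_1-1\}$, the non-negative integers in $G_p(A)$ congruent to $j$ modulo $a_1$ are exactly $j,\,j+a_1,\,j+2a_1,\dots,\,m_j^{(p)}-a_1$. This rests on the monotonicity $d(n+a_1)\ge d(n)$, valid because the map $(x_1,\dots,x_k)\mapsto(x_1+1,x_2,\dots,x_k)$ embeds representations of $n$ injectively into representations of $n+a_1$. From monotonicity, $S_p(A)$ is closed under addition of $a_1$, so conditions (ii) and (iii) defining $m_j^{(p)}$ force it to be the least element of $S_p(A)$ in its residue class. Hence
$$s_p^{(\mu)}(A) = \sum_{j=0}^{a_1-1}\sum_{t=0}^{(m_j^{(p)}-j)/a_1-1}(j+a_1 t)^\mu.$$

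Next I would invoke the Bernoulli polynomial identity $\sum_{t=0}^{N-1}(x+t)^\mu = [B_{\mu+1}(x+N)-B_{\mu+1}(x)]/(\mu+1)$, which follows by telescoping the difference relation $B_{\mu+1}(y+1)-B_{\mu+1}(y)=(\mu+1)y^\mu$. Setting $x=j/a_1$ and $N=(m_j^{(p)}-j)/a_1$ yields
$$\sum_{t=0}^{N-1}(j+a_1 t)^\mu = \frac{a_1^\mu}{\mu+1}\bigl[B_{\mu+1}(m_j^{(p)}/a_1) - B_{\mu+1}(j/a_1)\bigr].$$
Expanding $B_{\mu+1}(x)=\sum_{\kappa=0}^{\mu+1}\binom{\mu+1}{\kappa}B_\kappa x^{\mu+1-\kappa}$ on the first term absorbs the $a_1^\mu$ factor and, after summing over $j$, produces exactly the double sum $\frac{1}{\mu+1}\sum_\kappa\binom{\mu+1}{\kappa}B_\kappa a_1^{\kappa-1}\sum_j (m_j^{(p)})^{\mu+1-\kappa}$. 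For the second (subtracted) term I would apply the Raabe multiplication theorem $\sum_{j=0}^{a-1}B_n(j/a)=a^{1-n}B_n$, which collapses $\sum_{j=0}^{a_1-1}B_{\mu+1}(j/a_1)$ to $a_1^{-\mu}B_{\mu+1}$ and thus contributes $-B_{\mu+1}/(\mu+1)$ in total. Finally, splitting off the $\kappa=\mu+1$ summand from the first expansion (which equals $B_{\mu+1}a_1^{\mu+1}/(\mu+1)$ since the inner sum over $j$ is $a_1$) and combining it with $-B_{\mu+1}/(\mu+1)$ produces the remainder term $B_{\mu+1}(a_1^{\mu+1}-1)/(\mu+1)$, matching the claim.

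The main obstacle is the structural first step: verifying that conditions (ii)–(iii) defining $m_j^{(p)}$ truly single out the minimum of $S_p(A)$ in its residue class, so that the gaps stratify cleanly. This hinges on the injectivity observation giving $d(n+a_1)\ge d(n)$, which is essentially the same argument used in Lemma \ref{lem:spanumsg}. Once this is in hand, the remainder is a routine manipulation through standard Bernoulli identities (the binomial expansion and the Raabe multiplication formula).
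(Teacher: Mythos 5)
Your proof is correct. The paper itself states Proposition \ref{prp-mp} without proof (it is imported from the $p$-Ap\'ery set paper \cite{ko22}), so there is no in-paper argument to compare against; your route is the natural one and it checks out in every step: the monotonicity $d(n+a_1)\ge d(n)$ does force $m_j^{(p)}$ to be the least element of $S_p(A)$ in its residue class, so the gaps in class $j$ are exactly $j, j+a_1,\dots,m_j^{(p)}-a_1$, and the Faulhaber-type identity $\sum_{t=0}^{N-1}(x+t)^\mu=\bigl(B_{\mu+1}(x+N)-B_{\mu+1}(x)\bigr)/(\mu+1)$ together with Raabe's multiplication formula yields precisely the stated expression, with the $\kappa=\mu+1$ term and the Raabe contribution combining into $\frac{B_{\mu+1}}{\mu+1}(a_1^{\mu+1}-1)$ (and the result is consistent with the specializations (\ref{mp-n}) and (\ref{mp-s}) of Corollary \ref{cor-mp}). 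This is the expected generalization of the classical Brauer--Shockley/Selmer/Tripathi argument from $p=0$ to general $p$, with the only genuinely new ingredient being the residue-class stratification of $G_p(A)$, which you justify correctly.
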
 

When $\mu=0,1$ in Proposition \ref{prp-mp}, together with $g_p(A)$ we have formulae for the $p$-Frobenius number $g_p(A)$, the $p$-Sylvester number $n_p(A)$ and the $p$-Sylvester sum $s_p(A)$.

\begin{Cor}  
Let $k$, $p$ and $\mu$ be integers with $k\ge 2$, $p\ge 0$ and $\mu\ge 1$.  
Assume that $\gcd(a_1,a_2,\dots,a_k)=1$ and $a_1=\min(A)$.  We have 
\begin{align}  
g_p(A)&=\max_{0\le i\le a_1-1}m_i^{(p)}-a_1
\label{mp-g}\,,\\  
n_p(A)&=\frac{1}{a_1}\sum_{i=0}^{a_1-1}m_i^{(p)}-\frac{a_1-1}{2}\,,
\label{mp-n}\\
s_p(A)&=\frac{1}{2 a_1}\sum_{i=0}^{a_1-1}\bigl(m_i^{(p)}\bigr)^2-\frac{1}{2}\sum_{i=0}^{a_1-1}m_i^{(p)}+\frac{a_1^2-1}{12}\,.
\label{mp-s}
\end{align}
\label{cor-mp} 
\end{Cor}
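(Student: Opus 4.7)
The plan is to exploit the residue-class structure of $S_p(A)$ modulo $a_1$. The key structural observation underlying all three formulas is that adding $a_1$ to any element of $S_p(A)$ keeps it in $S_p(A)$: if $n$ admits at least $p+1$ distinct representations $e_1 a_1 + \cdots + e_k a_k$, then incrementing the first coefficient in each yields at least $p+1$ distinct representations of $n + a_1$. Consequently, for each residue $j \in \{0, 1, \ldots, a_1 - 1\}$, the set $S_p(A) \cap (j + a_1 \mathbb{N}_0)$ is the upward chain $m_j^{(p)}, m_j^{(p)} + a_1, m_j^{(p)} + 2 a_1, \ldots$, and so the gaps of $S_p(A)$ in class $j$ are exactly $\{j, j + a_1, \ldots, m_j^{(p)} - a_1\}$, an arithmetic progression of length $(m_j^{(p)} - j)/a_1$.

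Granting this, (\ref{mp-g}) is immediate: any $n \in G_p(A)$ of class $j$ satisfies $n \le m_j^{(p)} - a_1$, with equality achieved at the class that maximizes $m_j^{(p)}$. For (\ref{mp-n}), summing the cardinalities over $j$ and using $\sum_{j=0}^{a_1-1} j = a_1(a_1-1)/2$ gives
\[
n_p(A) = \sum_{j=0}^{a_1-1} \frac{m_j^{(p)} - j}{a_1} = \frac{1}{a_1}\sum_{j=0}^{a_1-1} m_j^{(p)} - \frac{a_1-1}{2}.
\]
For (\ref{mp-s}), the sum of the gaps in class $j$ equals $(m_j^{(p)} - j)(m_j^{(p)} + j - a_1)/(2 a_1)$ by the closed form for an arithmetic progression; expanding this and summing over $j$, the $(m_j^{(p)})^2$ and $m_j^{(p)}$ terms immediately match those in (\ref{mp-s}), while the purely numerical tail $-\frac{1}{2a_1}\sum_{j=0}^{a_1-1} j^2 + \frac{1}{2}\sum_{j=0}^{a_1-1} j$ collapses via $\sum j^2 = a_1(a_1-1)(2a_1-1)/6$ to the constant $(a_1^2-1)/12$.

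In fact these are the $\mu = 0$ and $\mu = 1$ instances of the more general Proposition~\ref{prp-mp} (the former interpreted as $\sum 1$), so an alternative route is simply to invoke that proposition; I prefer the direct residue-class argument because it is self-contained and explains why (\ref{mp-g}) is a maximum rather than a sum. No step is a genuine obstacle; the only mild subtlety is to check that $j = 0$ behaves uniformly. For $p \ge 1$ one has $0 \in G_p(A)$, and the class-$0$ progression $\{0, a_1, \ldots, m_0^{(p)} - a_1\}$ is non-empty, while for $p = 0$ one has $m_0^{(0)} = 0$ and class $0$ contributes no gaps; both cases are covered by the same formulas.
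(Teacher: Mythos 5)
Your argument is correct, but it is not the route the paper takes: the paper gives no standalone proof of Corollary~\ref{cor-mp}, instead obtaining (\ref{mp-n}) and (\ref{mp-s}) by specializing the Bernoulli-number power-sum identity of Proposition~\ref{prp-mp} (quoted from \cite{ko22}) at $\mu=0,1$, with (\ref{mp-g}) treated as the $p$-analogue of the Brauer--Shockley formula. You instead prove all three formulas directly from the residue-class structure: since $n\in S_p(A)$ implies $n+a_1\in S_p(A)$ (add $1$ to the $a_1$-coefficient in each of the $p+1$ representations), the gaps in class $j$ are exactly $\{j,\,j+a_1,\dots,m_j^{(p)}-a_1\}$, and counting, summing, and maximizing over classes yields (\ref{mp-n}), (\ref{mp-s}), (\ref{mp-g}); your arithmetic-progression computation for the sum of gaps, including the collapse of the numerical tail to $(a_1^2-1)/12$, checks out, as does your handling of the class $j=0$ for $p=0$ versus $p\ge1$. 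What your approach buys is self-containedness and transparency (it explains why (\ref{mp-g}) is a maximum while the other two are averages, and it is essentially the classical Selmer/Tripathi argument transplanted to the $p$-Ap\'ery set); what the paper's approach buys is uniformity, since Proposition~\ref{prp-mp} delivers all higher power sums $s_p^{(\mu)}(A)$ at once, with the corollary as the two lowest cases. One small point of care if you keep the direct route: you implicitly use that $m_j^{(p)}$ is the least element of $S_p(A)$ in class $j$, which follows from condition (iii) in its definition only after invoking the same upward-chain property, so state that explicitly.
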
 

\noindent 
{\it Remark.}  
When $p=0$, the identities (\ref{mp-g}), (\ref{mp-n}) and (\ref{mp-s}) are essentially due to Brauer and Shockley \cite{bs62}, Selmer \cite{se77} and Tripathi \cite{tr08}\footnote{The formula (\ref{mp-s}) appeared with a typo in \cite{tr08}, and it has been corrected in \cite{pu18}.}, respectively.

\subsection{$p$-Hilbert series}  

For a non-negative integer $p$, the {\it $p$-Hilbert series} of $S_p(A)$ is defined by 
$$
H_p(A;x):=H(S_p;x)=\sum_{s\in S_p(A)}x^s\,. 
$$  
When $p=0$, the $0$-Hilbert series is the original Hilbert series.  
In addition, the $p$-gaps generating function is defined by   
$$
\Psi_p(A;x)=\sum_{s\in G_p(A)}x^s\,,  
$$  
satisfying $H_p(A;x)+\Psi_p(A;x)=1/(1-x)$ ($|x|<1$). 
By using $p$-Ap\'ery set, we see that $S_p(A)={\rm Ape}_p(A;a)+a\mathbb N_0$, with $a=\min(A)$. Hence, 
\begin{equation} 
H_p(A;x)=\frac{1}{1-x^a}\sum_{m\in {\rm Ape}_p(A;a)}x^m\,. 
\label{eq:p-hilbert}
\end{equation}    

For three or more variables, it is not easy to obtain an explicit form of the $p$-Hilbert series. However, the $p$-Hilbert series may be explicitly given when the structure of the $p$-Ap\'ery set is known. 

When $A=\{a,a+d,a+2 d\}$ is the three variable arithmetic sequence\footnote{As mentioned above, in the case of 4 variables or more, the detailed situation has been unknown.} with $a\ge 3$, $d>0$ and $\gcd(a,d)=1$, the structure of the $p$-Ap\'ery set can be found (\cite{KY})\footnote{When $p=0$, the exact structure of a more general sequence $a,a+d,\dots,a+(k-1)d$ is known in \cite{se77}. When $p\ge 1$, it is very hard to know the precise structure of such a general sequence. Hence, we use the case for three variables.}. 
When $a$ is odd, for $0\le p\le\fl{a/2}$, all the elements $(a+d)x_2+(a+2 d)x_3$ of its Ap\'ery set are given as follows. That is, all possible $(x_2,x_3)$ are 
\begin{multline*} 
\left(0,\frac{a-1}{2}+p\right),~\left(1,\frac{a-1}{2}+p-1\right),~\dots,~\\
\left(2 p-2,\frac{a-1}{2}-p+2\right),~\left(2 p-1,\frac{a-1}{2}-p+1\right)
\end{multline*}  
and  
$$
(2 p,0),~\dots,~\left(2 p,\frac{a-1}{2}-p\right),~(2 p+1,0),~\dots,~\left(2 p+1,\frac{a-1}{2}-p-1\right)\,. 
$$  
When $a$ is even, for $0\le p\le\fl{a/2}$, we have all $(x_2,x_3)$ as 
\begin{align*} 
&\left(0,\frac{a}{2}+p-1\right),~\left(1,\frac{a}{2}+p-1\right),\\ 
&\left(2,\frac{a}{2}+p-3\right),~\left(3,\frac{a}{2}+p-3\right),\\ 
&\qquad\cdots\\
&\left(2 p-4,\frac{a}{2}-p+3\right),~\left(2 p-3,\frac{a}{2}-p+3\right),\\ 
&\left(2 p-2,\frac{a}{2}-p+1\right),~\left(2 p-1,\frac{a}{2}-p+1\right)\,, 
\end{align*}  
and   
\begin{align*} 
&(2 p,0),~(2 p,1),~\dots,~\left(2 p,\frac{a}{2}-p-1\right),\\
&(2 p+1,0),~(2 p+1,1),~\dots,~\left(2 p+1,\frac{a}{2}-p-1\right)\,. 
\end{align*}  
Therefore, by (\ref{eq:p-hilbert}), when $a$ is odd, we obtain 
\begin{align*} 
&H_p(a,a+d,a+2 d;x)\\
&=\frac{1}{1-x^a}\biggl(\sum_{i=0}^{2 p-1}x^{i(a+d)+(\frac{a-1}{2}+p-i)(a+2 d)}\\
&\quad +\sum_{i=0}^{\frac{a-1}{2}-p}x^{2 p(a+d)+i(a+2 d)}+\sum_{i=0}^{\frac{a-1}{2}-p-1}x^{(2 p+1)(a+d)+i(a+2 d)}\biggr)\\
&=\frac{1}{1-x^a}\biggl(x^{(\frac{a-1}{2}+p)(a+2 d)}\frac{x^{-2 p d}-1}{x^{-d}-1}\\ 
&\quad +x^{2 p(a+2 d)}\frac{1-x^{(\frac{a-1}{2}-p+1)(a+2 d)}}{1-x^{a+2 d}}
+x^{(2 p+1)(a+2 d)}\frac{1-x^{(\frac{a-1}{2}-p)(a+2 d)}}{1-x^{a+2 d}}\biggr)\\ 
&=\frac{1}{1-x^a}\biggl(\frac{x^{\frac{(a-1)(a+2 d)}{2}+p a+d}(1-x^{2 p d})}{1-x^d}\\
&\qquad +\frac{x^{2 p(a+2 d)}(1+x^{a+2 d}-2 x^{(\frac{a+1}{2}-p)(a+2 d)})}{1-x^{a+2 d}}\biggr)\,.
\end{align*}
When $a$ is even, we obtain 
\begin{align*} 
&H_p(a,a+d,a+2 d;x)\\
&=\frac{1}{1-x^a}\biggl(\sum_{i=0}^{p-1}x^{2 i(a+d)+(\frac{a}{2}+p-1-2 i)(a+2 d)}\\
&\quad +\sum_{i=0}^{p-1}x^{(2 i+1)(a+d)+(\frac{a}{2}+p-1-2 i)(a+2 d)}\\
&\quad +\sum_{i=0}^{\frac{a}{2}-p-1}x^{2 p(a+d)+i(a+2 d)}+\sum_{i=0}^{\frac{a}{2}-p-1}x^{(2 p+1)(a+d)+i(a+2 d)}\biggr)\\
&=\frac{1}{1-x^a}\biggl((x^{a+d}+1)x^{(\frac{a}{2}+p-1)(a+2 d)}\frac{x^{-2 p d}-1}{x^{-2 d}-1}\\ 
&\qquad +(x^{a+d}+1)x^{2 p(a+d)}\frac{1-x^{(\frac{a}{2}-p)(a+2 d)}}{1-x^{a+2 d}}\biggr)\\ 
&=\frac{x^{a+d}+1}{1-x^a}\biggl(\frac{x^{\frac{a(a+2 d)}{2}+(p+1)a}(1-x^{2 p d})}{1-x^{2 d}}\\
&\qquad +\frac{x^{2 p(a+d)}(1-x^{(\frac{a}{2}-p)(a+2 d)})}{1-x^{a+2 d}}\biggr)\,.
\end{align*}

\begin{Prop} 
Let $p$, $a$ and $d$ be integers with $p\ge 0$, $a\ge 3$, $d>0$ and $\gcd(a,d)=1$. Then 
for $0\le p\le\fl{a/2}$, we have 
\begin{align*}
&H_p(a,a+d,a+2 d;x)\\
&=
\left\{
\begin{alignedat}{2}
& \frac{1}{1-x^a}\biggl(\frac{x^{\frac{(a-1)(a+2 d)}{2}+p a+d}(1-x^{2 p d})}{1-x^d} && \\
&\qquad +\frac{x^{2 p(a+2 d)}(1+x^{a+2 d}-2 x^{(\frac{a+1}{2}-p)(a+2 d)})}{1-x^{a+2 d}}\biggr)&\quad&\text{if $a$ is odd},\\
& \frac{x^{a+d}+1}{1-x^a}\biggl(\frac{x^{\frac{a(a+2 d)}{2}+(p+1)a}(1-x^{2 p d})}{1-x^{2 d}} && \\
&\qquad +\frac{x^{2 p(a+d)}(1-x^{(\frac{a}{2}-p)(a+2 d)})}{1-x^{a+2 d}}\biggr)&\quad&\text{if $a$ is even}\,.
\end{alignedat}
\right. 
\end{align*} 
\label{prp:p-hilbert-arithm}
\end{Prop}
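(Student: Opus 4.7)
The plan is to apply the identity (\ref{eq:p-hilbert}), which reduces the computation of $H_p(a,a+d,a+2d;x)$ to evaluating $\sum_{m\in{\rm Ape}_p(A;a)} x^m$ and dividing by $1-x^a$. The explicit description of ${\rm Ape}_p(A;a)$ for the arithmetic triple, cited from \cite{KY} and recalled just before the proposition, lists every Ap\'ery element as $x_2(a+d)+x_3(a+2d)$ with $(x_2,x_3)$ running over explicit finite families. Thus the proof reduces to organizing these families and summing elementary geometric series.

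First I would treat the odd case. The Ap\'ery elements split into three families: the ``diagonal'' family $(x_2,x_3)=(i,\tfrac{a-1}{2}+p-i)$ for $i=0,\dots,2p-1$, the ``top row'' $(2p,i)$ for $i=0,\dots,\tfrac{a-1}{2}-p$, and the ``second top row'' $(2p+1,i)$ for $i=0,\dots,\tfrac{a-1}{2}-p-1$. Each family gives a geometric sum: the diagonal family has common ratio $x^{(a+d)-(a+2d)}=x^{-d}$, so its sum is $x^{(\frac{a-1}{2}+p)(a+2d)}(x^{-2pd}-1)/(x^{-d}-1)$, which after shifting the base exponent rewrites as $x^{\frac{(a-1)(a+2d)}{2}+pa+d}(1-x^{2pd})/(1-x^d)$. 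The two ``row'' families have common ratio $x^{a+2d}$, and after collecting them into a single factor $(1+x^{a+2d}-2x^{(\frac{a+1}{2}-p)(a+2d)})/(1-x^{a+2d})$ multiplied by $x^{2p(a+2d)}$, one obtains the odd-$a$ line of the stated formula.

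For the even case, the four listed families pair naturally: the first two (with $x_2=2i$ and $x_2=2i+1$) have exponents differing exactly by $a+d$, so they combine as $(x^{a+d}+1)\sum_{i=0}^{p-1} x^{2i(a+d)+(\frac{a}{2}+p-1-2i)(a+2d)}$, a geometric sum with ratio $x^{-2d}$ that collapses to $(x^{a+d}+1)x^{\frac{a(a+2d)}{2}+(p+1)a}(1-x^{2pd})/(1-x^{2d})$. The last two families (with $x_2=2p$ and $x_2=2p+1$) pair in the same way and yield $(x^{a+d}+1)x^{2p(a+d)}(1-x^{(\frac{a}{2}-p)(a+2d)})/(1-x^{a+2d})$. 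Factoring $(x^{a+d}+1)/(1-x^a)$ gives the even-$a$ line.

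The only real obstacle is bookkeeping: tracking the base exponents, the index ranges, and the signs carefully so the partial geometric sums telescope into the compact closed form. No conceptual difficulty arises once (\ref{eq:p-hilbert}) and the explicit Ap\'ery-set description from \cite{KY} are taken as given; the argument is essentially a direct computation assembled from the displays immediately preceding the proposition.
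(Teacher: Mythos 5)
Your overall route is exactly the paper's: the proof there is precisely the displayed computation preceding the proposition, namely apply (\ref{eq:p-hilbert}) to the explicit description of ${\rm Ape}_p(A;a)$ cited from \cite{KY} and sum the resulting geometric series, so there is no difference in approach. The problem is that your two decisive simplifications are asserted rather than carried out, and as asserted they do not hold. In the odd case the two row families contribute $x^{2p(a+d)}\bigl(1-x^{(\frac{a+1}{2}-p)(a+2d)}\bigr)/(1-x^{a+2d})$ and $x^{(2p+1)(a+d)}\bigl(1-x^{(\frac{a-1}{2}-p)(a+2d)}\bigr)/(1-x^{a+2d})$; their initial exponents differ by $a+d$, not by $a+2d$, so they cannot be collected into $x^{2p(a+2d)}\bigl(1+x^{a+2d}-2x^{(\frac{a+1}{2}-p)(a+2d)}\bigr)/(1-x^{a+2d})$ as you claim. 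In the even case the telescoped diagonal sum equals $x^{\frac{a(a+2d)}{2}+(p-1)a}(1-x^{2pd})/(1-x^{2d})$, i.e. the base exponent carries $(p-1)a$, not $(p+1)a$.

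A quick check makes the gap concrete. For $(a,d,p)=(3,1,1)$ one has ${\rm Ape}_1(3,4,5;3)=\{8,9,10\}$, so $H_1=(x^{8}+x^{9}+x^{10})/(1-x^{3})$, while the odd-case expression you claim to reach evaluates to $(x^{9}+2x^{10})/(1-x^{3})$; for $(a,d,p)=(4,1,1)$ one has ${\rm Ape}_1(4,5,6;4)=\{10,12,15,17\}$, while the even-case expression gives $(1+x^{5})(x^{10}+x^{20})/(1-x^{4})$ instead of $(1+x^{5})(x^{10}+x^{12})/(1-x^{4})$. To be fair, the same slips occur in the source's own passage from the term-by-term sums to the closed forms (there $x^{2p(a+d)}$ and $x^{(2p+1)(a+d)}$ are silently replaced by $x^{2p(a+2d)}$ and $x^{(2p+1)(a+2d)}$, and the even-case exponent becomes $(p+1)a$), so you have in effect reproduced the paper's computation, errors included, presumably by working backwards from the stated formula. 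But a proof must actually perform the summations: doing so gives, e.g., for the odd case the row contribution $\bigl(x^{2p(a+d)}(1+x^{a+d})-x^{pa+\frac{(a+1)(a+2d)}{2}}(1+x^{-d})\bigr)/(1-x^{a+2d})$, which is not the displayed second term. So the step ``one obtains the stated formula'' is a genuine gap, and carrying the computation out correctly shows the target closed form itself needs adjustment rather than verification.
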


\section{$p$-symmetric semigroup}

For $p\ge 0$, assume that the $p$-Ap\'ery set of $A=\{a_1,\dots,a_k\}$ with $a_1=\min(A)$ is given by 
\begin{align*}
{\rm Ape}_p(A;a_1)&=\{m_0^{(p)},m_1^{(p)},\dots,m_{a_1-1}^{(p)}\}\\
&=\{\ell_0(p),\ell_1(p),\dots,\ell_{a_1-1}(p)\}\,, 
\end{align*}
with $\ell_0(p)<\ell_1(p)<\dots<\ell_{a_1-1}(p)$. That is, the sequence $\ell_0(p)$,$\ell_1(p)$,$\dots$,$\ell_{a_1-1}(p)$ is the ascending permutation of $m_0^{(p)},m_1^{(p)},\dots,m_{a_1-1}^{(p)}$.   

The $p$-numerical semigroup $S_p$, which is $p$-generated from $A$, is called {\it $p$-symmetric} if for all $x\in\mathbb Z\backslash S_p$, $\ell_0(p)+g_p(A)-x\in S_p$, where $\ell_0(p)$ is the least element of $S_p$, that is the {\it $p$-multiplicity} of $S_p$ if $p\ge 1$; $\ell_0(p)=0$ if $p=0$. 
When $p=0$, "$0$-symmetric" is "symmetric".  
If a $p$-symmetric numerical semigroup $S_p$ further satisfies $\ell_0(p)=g_p(A)+1:=c_p(A)$, which is called $p$-conductor, then $S_p$ is called {\it $p$-completely-symmetric}.    

From the definition, the following conditions are obvious. 

\begin{Prop} 
For a $p$-semigroup $S_p$ ($p\ge 0$), the following conditions are equivalent.  
\begin{enumerate}
\item[{\rm (i)}] $S_p$ is $p$-symmetric. 
\item[{\rm (ii)}] $\displaystyle \# S_p\cap\{\ell_0(p),\dots,g_p(A)\}=\# G_p\cap\{\ell_0(p),\dots,g_p(A)\}=\frac{g_p(A)-\ell_0(p)+1}{2}$. 
\item[{\rm (iii)}] If $x+y=\ell_0(p)+g_p(A)$, then exactly one of non-negative integers $x$ and $y$ belongs to $S_p$ and another to $G_p$. 
\end{enumerate}
\label{prp:p-sym}
\end{Prop}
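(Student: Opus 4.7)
The plan hinges on the involution $\tau\colon\mathbb Z\to\mathbb Z$ defined by $\tau(x)=\ell_0(p)+g_p(A)-x$, which satisfies $\tau^2=\operatorname{id}$ and preserves the interval $K=\{0,1,\dots,\ell_0(p)+g_p(A)\}$. Before starting the implications I would record three cost-free structural facts: the block $L=\{0,\dots,\ell_0(p)-1\}$ lies entirely in $G_p$ (since $\ell_0(p)=\min S_p$ for $p\ge 1$, or is $0$ for $p=0$), the block $R=\{g_p(A)+1,\dots,\ell_0(p)+g_p(A)\}$ lies entirely in $S_p$ (since $g_p(A)$ is the $p$-Frobenius number), and $\tau$ swaps $L$ with $R$ bijectively. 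Therefore every condition in (i)--(iii) is ultimately a statement about how $\tau$ restricted to the middle block $J=\{\ell_0(p),\dots,g_p(A)\}$ interacts with $S_p$ and $G_p$.

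I would prove the cycle (iii)$\Rightarrow$(i)$\Rightarrow$(ii)$\Rightarrow$(iii). The implication (iii)$\Rightarrow$(i) is immediate: given $x\in\mathbb Z\setminus S_p$, if $x<0$ then $\tau(x)>g_p(A)$ is automatically in $S_p$, and if $x\ge 0$ then (iii) applied to the pair $\{x,\tau(x)\}$ gives $\tau(x)\in S_p$. For (i)$\Rightarrow$(ii), condition (i) says $\tau$ injects $G_p\cap J$ into $S_p\cap J$, which yields $|G_p\cap J|\le|S_p\cap J|$; the reverse inequality is then forced by noting that on $J$ the involution $\tau$ must be fixed-point-free, so its orbits are all of size two, and (i) pairs each $G_p$-element with a unique $S_p$-element. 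For (ii)$\Rightarrow$(iii), the equal-size condition on $J$ together with $\tau|_J$ being a fixed-point-free involution produces a perfect matching between $S_p\cap J$ and $G_p\cap J$; pairs $x+y=\ell_0(p)+g_p(A)$ with one foot outside $J$ are dispatched by the $L\leftrightarrow R$ swap already recorded.

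The main obstacle is the fixed-point analysis buried inside (i)$\Rightarrow$(ii). A $\tau$-fixed point of $J$ would have to be $(\ell_0(p)+g_p(A))/2$, and the one-sided definition of $p$-symmetry does not, on its face, prevent such an element from sitting in $S_p\cap J$, which would spoil the counting. The key step is thus to rule out this configuration, combining (i), the integrality of $(g_p(A)-\ell_0(p)+1)/2$ packaged into (ii), and the fact from Lemma~\ref{lem:spanumsg} that $S_p\cup\{0\}$ is a numerical semigroup closed under addition. Getting this parity-versus-closure bookkeeping right is the delicate part of the argument; once it is in place, the rest of the proof is routine involution accounting.
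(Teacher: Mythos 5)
Your involution set-up and the easy directions are fine -- (iii)$\Rightarrow$(i) works exactly as you say, and (iii) forces $g_p(A)+\ell_0(p)$ to be odd, whence (iii)$\Rightarrow$(ii) is routine. But the step you yourself flag as the crux of (i)$\Rightarrow$(ii), namely excluding the fixed point $(\ell_0(p)+g_p(A))/2$ from $S_p$ by ``parity-versus-closure bookkeeping,'' cannot be carried out. The classical argument (if $g/2\in S$ then $g=g/2+g/2\in S$, a contradiction) dies as soon as $\ell_0(p)>0$: twice the centre equals $\ell_0(p)+g_p(A)>g_p(A)$, which Lemma~\ref{lem:spanumsg} happily allows to lie in $S_p$. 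Indeed the paper's own data give counterexamples to (i)$\Rightarrow$(ii) under the literal one-sided definition of $p$-symmetry: for $A=\{3,7,11\}$, $p=4$ one has $S_4=\{35,36,38,\mapsto\}$, $g_4=37$, $\ell_0(4)=35$, so every $x\in\mathbb Z\setminus S_4$ satisfies $72-x\in S_4$ (condition (i) holds verbatim), yet $36=(g_4+\ell_0(4))/2\in S_4$, $\#\bigl(S_4\cap\{35,36,37\}\bigr)=2\ne 1=\#\bigl(G_4\cap\{35,36,37\}\bigr)$, and the pair $x=y=36$ violates (iii); the same happens for $A=\{6,7,17,28\}$, $p=12$. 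So the configuration you hoped to rule out actually occurs: the equivalence is only tenable if ``$p$-symmetric'' is read two-sidedly as in (iii), or supplemented by the parity condition $g_p(A)\not\equiv\ell_0(p)\pmod 2$, which is precisely what the paper builds into Corollary~\ref{cor:p-sym}. (Note also that the paper offers no proof of this proposition -- it declares the equivalences obvious -- so there is nothing to fall back on.)

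There is a second, related hole in the counting even away from the fixed point. Condition (i) only forbids orbits $\{x,\tau(x)\}$ consisting of two gaps; it does not forbid orbits consisting of two elements of $S_p$, and for $p\ge 1$ closure does not forbid them either, since such a pair sums to $\ell_0(p)+g_p(A)>g_p(A)$. Hence your claim that the reverse inequality $\#(S_p\cap J)\le\#(G_p\cap J)$ is ``forced'' by fixed-point-freeness is unsupported: fixed-point-freeness plus (i) still only yields the injection of $G_p\cap J$ into $S_p\cap J$. The same issue infects your (ii)$\Rightarrow$(iii): equal cardinalities together with a fixed-point-free involution do not by themselves produce a perfect matching between $S_p\cap J$ and $G_p\cap J$, because an element--element orbit balanced by a gap--gap orbit is numerically consistent with (ii). So both of those implications need a genuine additional argument (or the adoption of (ii)/(iii) as the definition of $p$-symmetry), which your outline does not supply.
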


\noindent 
{\bf Examples}  



When $A=\{3,10,17\}$, $S_p$ is $p$-symmetric for $p=1,2,7,8,9,10,11,19,20$, $\dots$, $37,38,39,40$. In addition, $S_p$ is $p$-completely-symmetric for $p=19,20$, $\dots$, $36,39,40$. For example, $G_{19}(A)=\{0,1,\dots,126\}$ and $S_{19}(A)=\{127,128,\mapsto\}$, so $S_{19}$ is $19$-completely-symmetric.  
$S_p$ is not $p$-symmetric but $p$-pseudo-symmetric (see below) for $p=0,3,4,5,6,12,13,14,15,16,17,18$.

 
When $A=\{4,5,6\}$, we get that 
\begin{align*}  
S_8&=\{36,38,40,41,\mapsto\}\,,\\
G_8&=\{0,1,\dots,35,37,39\}\,. 
\end{align*} 
Then we know that 
$$
g_8(A)=39\quad\hbox{and}\quad {\rm Ape}_8(A)=\{36=m_0^{(8)},38=m_2^{(8)},41=m_1^{(8)},43=m_3^{(8)}\}\,. 
$$ 
Hence, we see that 
\begin{align*} 
x&\in G_8\cup\mathbb Z^{-}=\{\leftarrowtail,-2,-1,0,1,\dots,35,37,39\}\\
&\Longleftrightarrow\\
36+39-x&\in S_8=\{36,38,40,41,\mapsto\}\,. 
\end{align*} 
Therefore, $S_8$, which is $8$-generated from $A=\{4,5,6\}$, is $8$-symmetric.  
In fact, we can obtain 
$$
36+43=38+41\,. 
$$ 
This fact is a generalization of the result by Ap\'ery \cite{Apery}. 

\begin{Lem}
For a non-negative integer $p$, $S_p$, which is $p$-generated from $A$, is $p$-symmetric if and only if $\ell_i(p)+\ell_{a-i-1}(p)=g_p(A)+\ell_0(p)+a$ ($i=1,2,\dots,\fl{a/2}$). 
\label{lem:sym-apery}
\end{Lem}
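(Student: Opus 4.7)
The plan is to apply Proposition~\ref{prp:p-sym}(iii), which rephrases $p$-symmetry as: for every $x\in\mathbb Z$, exactly one of $x$ and $N-x$ lies in $S_p$, where $N:=\ell_0(p)+g_p(A)$. The equivalence is automatic outside the interval $[\ell_0(p),g_p(A)]$: if $x<\ell_0(p)$ then $x\notin S_p$ while $N-x>g_p(A)$ forces $N-x\in S_p$, and symmetrically for $x>g_p(A)$. Hence the whole content of $p$-symmetry is encoded in the involution $x\leftrightarrow N-x$ acting on $[\ell_0(p),g_p(A)]$.

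Next I exploit the Ap\'ery-set structure of $S_p$ in each residue class. For each $j\in\{0,1,\dots,a-1\}$, the elements of $S_p$ congruent to $j\pmod a$ form the arithmetic progression $m_j^{(p)},\,m_j^{(p)}+a,\,m_j^{(p)}+2a,\dots$, so an integer $x\equiv j\pmod a$ lies in $S_p$ iff $x\ge m_j^{(p)}$. Setting $j^*:=(N-j)\bmod a$, the partner $N-x$ has residue $j^*$. Applying the pairing biconditional at $x=m_j^{(p)}-a$ (the largest gap in residue class $j$) yields $m_j^{(p)}+m_{j^*}^{(p)}\le N+a$, whereas applying it at $x=m_j^{(p)}$ yields $m_j^{(p)}+m_{j^*}^{(p)}>N$; since $m_j^{(p)}+m_{j^*}^{(p)}\equiv N\pmod a$, the latter upgrades to $m_j^{(p)}+m_{j^*}^{(p)}\ge N+a$. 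Thus $p$-symmetry is equivalent to
\[
m_j^{(p)}+m_{j^*}^{(p)}=N+a\qquad(j=0,1,\dots,a-1).
\]

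Finally I transport this residue-indexed identity to sorted indices. Let $\sigma$ be the permutation with $\ell_i(p)=m_{\sigma(i)}^{(p)}$, and set $\tau(i):=\sigma^{-1}\bigl((\sigma(i))^*\bigr)$, an involution on $\{0,\dots,a-1\}$ satisfying $\ell_i(p)+\ell_{\tau(i)}(p)=N+a$ for every $i$. Because $\ell_0(p)<\ell_1(p)<\cdots<\ell_{a-1}(p)$ and the pair sums are constant, $\tau$ is strictly order-reversing: if $i<i'$ then $\ell_{\tau(i)}(p)=N+a-\ell_i(p)>N+a-\ell_{i'}(p)=\ell_{\tau(i')}(p)$, so $\tau(i)>\tau(i')$. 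Hence $\tau(i)=a-1-i$, and the condition becomes $\ell_i(p)+\ell_{a-1-i}(p)=N+a$ for all $i$. The case $i=0$ is automatic by (\ref{mp-g}) of Corollary~\ref{cor-mp}, which gives $\ell_{a-1}(p)=g_p(A)+a$, and by the $i\leftrightarrow a-1-i$ symmetry only $i=1,\dots,\lfloor a/2\rfloor$ need be required, matching the stated form.

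The main obstacle is pinning down the pointwise equality $m_j^{(p)}+m_{j^*}^{(p)}=N+a$ cleanly in residue classes where either $m_j^{(p)}-a$ or $m_j^{(p)}$ falls outside $[\ell_0(p),g_p(A)]$ (for instance when $m_j^{(p)}>g_p(A)$, which can occur for several residue classes beyond the one hosting $\ell_{a-1}(p)=g_p(A)+a$). In such a class one of the two inequalities is vacuous and must be inherited from the partner class $j^*$; once this combinatorial bookkeeping is in place, the order-reversing step is immediate.
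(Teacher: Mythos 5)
The paper states Lemma~\ref{lem:sym-apery} without proof (it is presented as a direct generalization of Ap\'ery's classical fact), so there is no in-paper argument to compare against; judged on its own, your proposal is correct and is surely the intended argument: reduce $p$-symmetry to the pairing statement of Proposition~\ref{prp:p-sym}(iii), test it at $x=m_j^{(p)}-a$ and $x=m_j^{(p)}$ in each residue class to get $m_j^{(p)}+m_{j^*}^{(p)}=N+a$ with $N=\ell_0(p)+g_p(A)$ and $j^*\equiv N-j\pmod a$, and then pass to sorted indices by the order-reversing involution, the case $i=0$ being (\ref{mp-g}). Two small points would finish it off. First, the ``main obstacle'' you flag is smaller than you fear: for $p\ge 1$ any element of $S_p$ has at least two representations, hence at least two summands, so $\ell_0(p)\ge 2a$; since always $m_j^{(p)}\le g_p(A)+a$, every $m_j^{(p)}$ satisfies $m_j^{(p)}<N$ and both test points lie in $[0,N]$, so no class is vacuous. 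Only $p=0$ (where $N=g_0(A)$ and several classes may have $m_j^{(0)}>N$) needs your partner-class remark, and there it works because if both $m_j^{(0)},m_{j^*}^{(0)}>N$ their sum already exceeds $N$, forcing equality with $N+a$; this is exactly the classical case. Second, for the ``if'' direction you should state the one-line converse transport: if $\ell_i(p)+\ell_{a-1-i}(p)=N+a$ and $\ell_i(p)=m_j^{(p)}$, then $\ell_{a-1-i}(p)\equiv N-j\pmod a$ and, being an Ap\'ery element, equals $m_{j^*}^{(p)}$, which recovers the residue-indexed identity and hence $p$-symmetry. Finally, be aware where the real weight sits: unlike the case $p=0$, the half of Proposition~\ref{prp:p-sym}(iii) asserting that $x\in S_p$ forces $N-x\notin S_p$ is not immediate from the definition of $p$-symmetry when $p\ge1$; you (like the paper) take that proposition as given, which is legitimate here but worth acknowledging explicitly.
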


If one element $m_i^{(p)}$ in ${\rm Ape}_p(A;a)$ with $a=\min(A)$ can extend such that $m_i^{(p)}\equiv i\pmod a$ for any $i$, Lemma \ref{lem:sym-apery} can be restated as follows. For simplicity, put $g=g_p(A)$ and $\ell=\ell_0(p)$. 

\begin{Lem}
For a non-negative integer $p$, $S_p$, which is $p$-generated from $A$, is $p$-symmetric if and only if $m_{(g+\ell+1)/2+j}(p)+m_{(g+\ell-1)/2+j}(p)=g_p+\ell+a$ ($j\in\mathbb Z$). 
\label{lem:sym-apery-m}
\end{Lem}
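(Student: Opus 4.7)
My plan is to derive Lemma \ref{lem:sym-apery-m} as a direct re-indexing of Lemma \ref{lem:sym-apery}, carrying the condition on the sorted Ap\'ery values $\ell_i(p)$ over to the residue-indexed values $m_i^{(p)}$. To begin I would introduce the unique permutation $\sigma$ of $\{0,1,\dots,a-1\}$ defined by $\ell_i(p)=m_{\sigma(i)}^{(p)}$, which by construction satisfies $\sigma(i)\equiv\ell_i(p)\pmod{a}$. The Lemma \ref{lem:sym-apery} identity $\ell_i(p)+\ell_{a-i-1}(p)=g_p+\ell+a$ then reads $m_{\sigma(i)}^{(p)}+m_{\sigma(a-i-1)}^{(p)}=g_p+\ell+a$, and reducing modulo $a$ yields $\sigma(i)+\sigma(a-i-1)\equiv g_p+\ell\pmod{a}$.

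Because $\sigma$ is a bijection, as $i$ runs through $\{0,1,\dots,a-1\}$ the pairs $(\sigma(i),\sigma(a-i-1))$ realize every pair of residues $(r,r')$ modulo $a$ with $r+r'\equiv g_p+\ell\pmod{a}$. Consequently Lemma \ref{lem:sym-apery} is equivalent to the universal statement that $m_r^{(p)}+m_{r'}^{(p)}=g_p+\ell+a$ whenever $r+r'\equiv g_p+\ell\pmod{a}$. From Proposition \ref{prp:p-sym}(ii) the quantity $g_p-\ell$ is odd for every $p$-symmetric semigroup, and therefore $g_p+\ell$ is also odd, so $(g_p+\ell\pm 1)/2$ are integers and the parametrization written in Lemma \ref{lem:sym-apery-m} is well-defined.

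The final step is to identify the pair $\bigl((g_p+\ell+1)/2+j,\,(g_p+\ell-1)/2+j\bigr)$ with the residue pairs of the previous paragraph as $j\in\mathbb Z$ varies, using the extended indexing $m_i^{(p)}$ for $i\in\mathbb Z$ specified by $m_i^{(p)}\equiv i\pmod{a}$ (the natural periodic extension of the Ap\'ery labels). Since the two centres $(g_p+\ell+1)/2$ and $(g_p+\ell-1)/2$ already satisfy the residue-sum condition modulo $a$, the $j$-sliding traverses every pair of residues summing to $g_p+\ell$ mod $a$, which gives the equivalence claimed. The main obstacle I foresee is the bookkeeping around the extended indexing and the involution $r\mapsto(g_p+\ell-r)\bmod a$: when $a$ is odd the fixed residue $r$ with $2r\equiv g_p+\ell\pmod{a}$ yields a self-paired equation $2m_r^{(p)}=g_p+\ell+a$ that has to be incorporated consistently, and some care is needed to ensure the parametrization hits each unordered pair exactly once as $j$ varies. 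Beyond this indexing subtlety no new computation is required, since the proof is a reformulation of Lemma \ref{lem:sym-apery}.
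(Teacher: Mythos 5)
Your overall strategy---obtaining the lemma by re-indexing Lemma \ref{lem:sym-apery} through the permutation $\sigma$ and the involution $r\mapsto (g+\ell-r)\bmod a$---is exactly what the paper intends: the paper offers no separate proof at all, presenting the lemma as a direct restatement of Lemma \ref{lem:sym-apery} under the extended indexing $m_i^{(p)}\equiv i\pmod{a}$. Your reduction to the universal statement ($m_r^{(p)}+m_{r'}^{(p)}=g+\ell+a$ whenever $r+r'\equiv g+\ell\pmod{a}$), the parity remark via Proposition \ref{prp:p-sym}(ii), and the treatment of the self-paired residue when $a$ is odd are all sound. One small omission: for the converse passage from the universal statement back to the sorted identity $\ell_i(p)+\ell_{a-i-1}(p)=g+\ell+a$ you should add the one-line argument that the universal statement makes $x\mapsto g+\ell+a-x$ an involution of ${\rm Ape}_p(A;a)$, hence order-reversing, so it sends $\ell_i(p)$ to $\ell_{a-1-i}(p)$.

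The genuine gap is in your final identification step. For the pair written as $\bigl((g+\ell+1)/2+j,\,(g+\ell-1)/2+j\bigr)$ the index sum is $g+\ell+2j$, which varies with $j$; as $j$ slides you traverse the pairs of \emph{consecutive} residues $(r+1,r)$, not the pairs of residues summing to $g+\ell$ modulo $a$. So the claim that this sliding ``traverses every pair of residues summing to $g_p+\ell$'' is false for that parametrization, and the statement taken literally cannot hold: $m_{r+1}^{(p)}+m_r^{(p)}=g+\ell+a$ for all $r$ would force $m_{r+2}^{(p)}=m_r^{(p)}$. The paper's own example $A=\{4,5,6\}$, $p=8$ refutes the literal reading: there $g+\ell+a=79$, yet $m_3^{(8)}+m_2^{(8)}=43+38=81$. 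The identity must be read (or corrected) with opposite shifts, $m_{(g+\ell+1)/2+j}^{(p)}+m_{(g+\ell-1)/2-j}^{(p)}=g+\ell+a$, in line with the pattern of Theorem \ref{th:ps-sym-ap}; with that reading the index sum is constantly $\equiv g+\ell\pmod{a}$ and your sliding argument goes through verbatim. So the repair is to flag the typo and run your last paragraph with shifts $+j$ and $-j$; as written, the concluding step fails.
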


From Lemma \ref{lem:sym-apery} or Lemma \ref{lem:sym-apery-m}, we have a relation between $p$-Frobenius number $g_p(A)$ and $p$-Sylvester number $n_p(A)$. 


\begin{Prop}  
For a non-negative integer $p$, $S_p$, which is $p$-generated from $A$, is $p$-symmetric if and only if 
$$
n_p(A)=\frac{g_p(A)+\ell_0(p)+1}{2}\,. 
$$ 
\label{prp:sym-n-g}
\end{Prop}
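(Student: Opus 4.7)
The plan is to derive the equivalence from Proposition \ref{prp:p-sym}(ii) by a short counting argument. That proposition characterizes $p$-symmetry by the fact that the interval $\{\ell_0(p),\dots,g_p(A)\}$ is split evenly between $S_p$ and $G_p$; it then remains only to express $n_p(A)$ in terms of this half-count.

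The key step is the identity $n_p(A) = g_p(A) + 1 - M$, where $M := |S_p \cap \{\ell_0(p),\dots,g_p(A)\}|$. Since $g_p(A) = \max G_p(A)$, we have $G_p(A) \subseteq \{0,1,\dots,g_p(A)\}$. Moreover, every integer smaller than $\ell_0(p)$ lies in $G_p(A)$, because $\ell_0(p) = \min S_p$ when $p \ge 1$ (and the statement is vacuous when $p = 0$, where $\ell_0(0) = 0$ by convention). Splitting $\{0,\dots,g_p(A)\}$ at $\ell_0(p)$, and noting that $\{\ell_0(p),\dots,g_p(A)\}$ contains exactly $M$ elements of $S_p$ and the remaining $g_p(A)-\ell_0(p)+1-M$ elements of $G_p(A)$, one obtains $n_p(A) = \ell_0(p) + (g_p(A)-\ell_0(p)+1-M) = g_p(A)+1-M$.

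Combining this with Proposition \ref{prp:p-sym}(ii), which states that $S_p$ is $p$-symmetric if and only if $M = (g_p(A)-\ell_0(p)+1)/2$, and substituting into $n_p(A) = g_p(A)+1-M$ yields $n_p(A) = (g_p(A)+\ell_0(p)+1)/2$ exactly in the $p$-symmetric case; the substitution reverses, so the equivalence follows. There is no substantive obstacle here: once the partition of $\{0,\dots,g_p(A)\}$ is set up, the argument reduces to a one-line computation, and the only care needed is to handle $p = 0$ and $p \ge 1$ uniformly through the convention $\ell_0(0) = 0$.
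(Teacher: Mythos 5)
Your argument is correct, and it takes a genuinely different route from the paper. You reduce everything to the interval-counting characterization of $p$-symmetry in Proposition \ref{prp:p-sym}(ii), combined with the unconditional identity $n_p(A)=g_p(A)+1-M$ obtained by splitting $\{0,\dots,g_p(A)\}$ at $\ell_0(p)$ (using that all integers below $\ell_0(p)$ are $p$-gaps and none above $g_p(A)$ are); since that identity holds regardless of symmetry and is an invertible affine relation between $M$ and $n_p(A)$, the equivalence does reverse as you claim, and the $p=0$ case is handled by the convention $\ell_0(0)=0$. The paper instead works through the $p$-Ap\'ery set: it invokes the pairing condition of Lemma \ref{lem:sym-apery-m} ($m_{(g+\ell+1)/2+j}^{(p)}+m_{(g+\ell-1)/2+j}^{(p)}=g_p+\ell+a$) together with Selmer-type formula (\ref{mp-n}) for $n_p(A)$, and in the converse direction writes the pair sums as $g+\ell+a c_i$ and forces $c_i=1$ from $\sum_i c_i=a$. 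Your approach is more elementary and self-contained at the level of sets of integers, avoiding the Ap\'ery machinery entirely; the paper's approach buys a statement expressed directly in terms of the $m_i^{(p)}$, which is the form reused later (e.g., in Theorem \ref{th:ps-sym-ap} and the explicit computations). The only dependence you should be conscious of is that your proof inherits its strength from Proposition \ref{prp:p-sym}(ii), in particular the claim that no two elements of $S_p$ in the interval can sum to $\ell_0(p)+g_p(A)$ (the ``exactly one'' half), which the paper asserts as obvious but which is the nontrivial content for $p\ge 1$; the paper's proof leans on the analogous unproved Lemma \ref{lem:sym-apery-m}, so both arguments rest on a prior stated result of comparable depth.
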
  
\begin{proof}  
By the formula (\ref{mp-n}), we have 
\begin{align*}  
2 n_p(A)&=\frac{2}{a}\sum_{i=0}^{a-1}m_i^{(p)}-a+1\\
&=\frac{1}{a}\sum_{i=0}^{a-1}\bigl(m_{(g+\ell+1)/2+i}^{(p)}+m_{(g+\ell-1)/2+i}^{(p)}\bigr)-a+1\\
&=\ell+g_p+1\,. 
\end{align*} 
On the other hand, by (\ref{mp-n}) again, we have 
\begin{align*}  
&g_p(A)+\ell_0(p)+1=2 n_p(A)\\
&=\frac{1}{a}\sum_{i=0}^{a-1}\bigl(m_{(g+\ell+1)/2+i}^{(p)}+m_{(g+\ell-1)/2+i}^{(p)}\bigr)-a+1\\
&=\frac{1}{a}\sum_{i=0}^{a-1}\bigl(g_p(A)+\ell_0(p)+a c_i\bigr)-a+1\\
&=g_p(A)+\ell_0(p)-a+1+\frac{1}{a}\sum_{i=0}^{a-1}c_i\,. 
\end{align*} 
Since $\sum_{i=0}^{a-1}c_i=a$, we know that $c_i=1$ for all $i$. 
\end{proof}


Let us consider the two variables' case.  
For any integer $n\in S_p(A)$ for $A=\{a,b\}$ with $\gcd(a,b)=1$ and $a<b$, let $x_0$ be the largest integer $x$ satisfying $n=a x+b y$ ($y\ge 0$). Then there exists the least non-negative integer $y_0$ such that $n=a x_0+b y_0$, which is called the {\it standard form} of the representation of $n$. Since $S_p(A)\subseteq\mathbb N\subseteq\mathbb Z$ and $\mathbb Z$ is Euclidean domain, the standard form is unique. 

\begin{Lem}  
Let $n=a x_0+b y_0$ be the standard form of $n$. Then 
\begin{enumerate} 
\item[\rm (i)] $0\le y_0\le a-1$. 
\item[\rm (ii)] For any integer $n\in S(A)=S_0(A)$, $n\in S_p(A)$ if and only if $x_0\ge p b$. 
\end{enumerate}
\label{lem:standard-form}
\end{Lem}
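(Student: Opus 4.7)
The plan is to exploit the one-parameter description of all integer solutions to $ax+by=n$. Since $\gcd(a,b)=1$, if $(x_0,y_0)\in\mathbb Z^2$ is any solution, then every integer solution has the form $(x_0-tb,\,y_0+ta)$ for some $t\in\mathbb Z$; conversely, two consecutive solutions (ordered by $x$) differ by $(b,-a)$. This single observation drives both parts.

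For (i), I would argue as follows. By definition $y_0\ge 0$, so the lower bound is immediate. For the upper bound, note that the representation $(x_0+b,\,y_0-a)$ is another integer solution with strictly larger $x$-coordinate. Since $x_0$ is maximal among those $x$ with $y\ge 0$, we must have $y_0-a<0$, that is $y_0\le a-1$.

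For (ii), I would count representations directly. Every non-negative integer representation of $n$ has the form $(x_0-tb,\,y_0+ta)$ for some $t\ge 0$ (decreasing $x$ from the standard form by multiples of $b$). The constraint $y_0+ta\ge 0$ holds automatically because $y_0,t,a\ge 0$, while $x_0-tb\ge 0$ forces $t\le x_0/b$. Hence
\[
d(n;a,b)=\fl{x_0/b}+1.
\]
Consequently $n\in S_p(A)$ if and only if $d(n;a,b)>p$, i.e.\ $\fl{x_0/b}\ge p$, which is equivalent to $x_0\ge pb$.

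The argument is essentially bookkeeping: the only subtlety is verifying that there are no other integer solutions with $x\in[0,x_0]$ besides those in the arithmetic progression $x_0,x_0-b,x_0-2b,\dots$, and this is exactly where $\gcd(a,b)=1$ is used (through the fact that the $x$-coordinate must change by a multiple of $b$ between solutions). Once that is noted, both (i) and (ii) follow in one line each, so I do not expect any genuine obstacle beyond stating the one-parameter family cleanly.
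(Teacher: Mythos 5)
Your proposal is correct and follows essentially the same route as the paper: both arguments enumerate the non-negative representations of $n$ as the arithmetic progression $(x_0-tb,\,y_0+ta)$, $t\ge 0$, descending from the standard form, the paper phrasing this as ``at most $p$ versus at least $p+1$ representations'' while you record the exact count $\fl{x_0/b}+1$. No gap; your explicit count (valid since $x_0\ge 0$ for $n\in S_0(A)$) is just a slightly sharper bookkeeping of the same idea.
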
 
\begin{proof}  
(i) is obvious.  \\ \noindent  
(ii) Assume that $n=a x_0+b y_0$ and $x_0<p b$. Since $x_0-p b<0$, $n=a x_0+b y_0=a(x_0-b)+b(y_0+a)=\cdots=a\bigl(x_0-(p-1)b\bigr)+b\bigl(y_0+(p-1)a\bigr)$ has at most $p$ representations. But $n\in S_p(A)$ implies that $n$ must have more than $p$ representations. This is the contradiction.  
On the contrary, if $x_0\ge pb$, then $n=a x_0+b y_0$ has at least $p+1$ representations, so $n\in S_p(A)$. 
\end{proof}

By Proposition \ref{prp:sym-n-g} and Lemma \ref{lem:standard-form}, together with the formulas (\ref{eq:g2}) and (\ref{eq:n2}), we can show the $p$-symmetric property for two variables.

\begin{theorem}  
For any non-negative integer $p$, $S_p(a,b)$ with $\gcd(a,b)=1$ is $p$-symmetric.  
\label{th:2-psym}
\end{theorem}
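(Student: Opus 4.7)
The plan is to apply Proposition~\ref{prp:sym-n-g}, which reduces $p$-symmetry to the single numerical identity
\[
n_p(A) \;=\; \frac{g_p(A)+\ell_0(p)+1}{2}.
\]
Since explicit formulas for $g_p(a,b)$ and $n_p(a,b)$ are already available in (\ref{eq:g2}) and (\ref{eq:n2}), the only missing ingredient is the $p$-multiplicity $\ell_0(p)$, i.e.\ the least element of $S_p(a,b)$. So the strategy has two steps: first identify $\ell_0(p)$, then substitute everything into the identity above.

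For the first step, I would use Lemma~\ref{lem:standard-form}. Writing any $n\in S_p(a,b)$ in its standard form $n=ax_0+by_0$ with $0\le y_0\le a-1$, part~(ii) of the lemma says $n\in S_p(a,b)$ if and only if $x_0\ge pb$. Among all pairs $(x_0,y_0)$ with $x_0\ge pb$ and $y_0\ge 0$, the value $ax_0+by_0$ is minimized at $x_0=pb$, $y_0=0$, giving $\ell_0(p)=pab$. As a sanity check, $pab$ admits exactly the $p+1$ representations $(pb-jb,\,ja)$ for $j=0,1,\dots,p$, so indeed $pab\in S_p(a,b)$, while any smaller positive integer in $\langle a,b\rangle$ has standard form with $x_0<pb$ and hence at most $p$ representations.

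For the second step, I would just substitute. Using (\ref{eq:g2}) and $\ell_0(p)=pab$,
\[
\frac{g_p(a,b)+\ell_0(p)+1}{2}
=\frac{(p+1)ab-a-b+pab+1}{2}
=pab+\frac{(a-1)(b-1)}{2},
\]
which is exactly $n_p(a,b)$ by (\ref{eq:n2}). Proposition~\ref{prp:sym-n-g} then delivers the $p$-symmetry of $S_p(a,b)$. There is no real obstacle here; the only substantive point is pinning down $\ell_0(p)=pab$, and that follows immediately from the standard-form lemma. Note that this proof also recovers the classical fact that $\langle a,b\rangle$ is symmetric when $p=0$, in which case $\ell_0(0)=0$ and the identity reduces to $n(a,b)=(g(a,b)+1)/2$.
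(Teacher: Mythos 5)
Your proposal is correct and follows essentially the same route as the paper: it determines $\ell_0(p)=pab$ via Lemma~\ref{lem:standard-form} and then verifies the criterion of Proposition~\ref{prp:sym-n-g} using the explicit formulas (\ref{eq:g2}) and (\ref{eq:n2}). The only difference is cosmetic: you spell out the minimization argument for $\ell_0(p)$ slightly more explicitly than the paper does.
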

\begin{proof} 
When $A=\{a,b\}$ with $\gcd(a,b)=1$, by Lemma \ref{lem:standard-form}, the least integer whose number of representations in terms of $a$ and $b$ is more than $p$ is $p a b$. That is, the non-negative integral solutions of $a x+b y=p a b$ are $(x,y)=\bigl(j b, (p-j)a\bigr)$ ($j=0,1,\dots,p$). Since $\ell_0(p)=p a b$, by Proposition \ref{prp:sym-n-g} together with the formulas (\ref{eq:g2}) and (\ref{eq:n2}), we have 
\begin{align*} 
\frac{g_p(A)+\ell_0(p)+1}{2}&=\frac{(p+1)a b-a-b+p a b+1}{2}\\
&=p a b+\frac{(a-1)(b-1)}{2}=n_p(a,b)\,. 
\end{align*}
\end{proof}

For three or more variables, when $p=0$, several results have been known (e.g., \cite{EL94, GRJ17,Matt04}). However, when $p\ge 1$, it is not easy to decide whether a given $p$-numerical semigroup is $p$-symmetric. Here, we show a result for arithmetic sequence.    

\begin{theorem}  
Let $a$ and $d$ be integers with $a\ge 3$ even, $d>0$ and $\gcd(a,d)=1$. Then $S_p(a,a+d,a+2 d)$ is $p$-symmetric when $p=a/2-1$.  
\label{th:3arithm-psym}
\end{theorem}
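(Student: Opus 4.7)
\noindent\textit{Proof plan.} The plan is to invoke Proposition \ref{prp:sym-n-g}, which reduces $p$-symmetry to the single identity $n_p(A)=(g_p(A)+\ell_0(p)+1)/2$. Both sides will be read off from the explicit description of ${\rm Ape}_p(A;a)$ given immediately before Proposition \ref{prp:p-hilbert-arithm}, specialised to the extremal value $p=a/2-1$ of the allowed range $0\le p\le\fl{a/2}$.

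First, I would specialise the Ap\'ery-set description. With $p=a/2-1$ the parameters collapse: $a/2+p-1=a-2$, $a/2-p+1=2$, and $a/2-p-1=0$. The first family thus becomes the pairs $(x_2,x_3)=(2i,a-2-2i)$ and $(2i+1,a-2-2i)$ for $i=0,1,\dots,a/2-2$, contributing the values $v_1(i):=(a-2)(a+2d)-2id$ and $v_2(i):=v_1(i)+(a+d)$; the second family collapses to the two pairs $(a-2,0)$ and $(a-1,0)$, contributing $u_1:=(a-2)(a+d)$ and $u_2:=(a-1)(a+d)$. Together these $a$ values form ${\rm Ape}_p(A;a)$.

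Next, I would locate the extremes. The inequality $v_1(i)-u_1=d(a-2-2i)>0$ on the range $0\le i\le a/2-2$, combined with $v_2(i)>v_1(i)$ and $u_2>u_1$, forces $\ell_0(p)=u_1$. Both sequences $v_1(i)$ and $v_2(i)$ are strictly decreasing in $i$ with common difference $-2d$, and $v_2(0)-u_2=(a-2)d>0$, so the maximum of ${\rm Ape}_p(A;a)$ is $v_2(0)$; hence $g_p(A)=v_2(0)-a$ by (\ref{mp-g}). Summing the two arithmetic progressions in $v_1,v_2$ and adding $u_1+u_2$ produces a closed form for $\Sigma:=\sum_{m\in{\rm Ape}_p(A;a)}m$, and then (\ref{mp-n}) gives $n_p(A)=\Sigma/a-(a-1)/2$.

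Finally, expanding both $n_p(A)$ and $(g_p(A)+\ell_0(p)+1)/2=(v_2(0)+u_1-a+1)/2$ as polynomials in $a$ and $d$ (it is convenient to write $a=2n$ to clear the halves; $d$ is necessarily odd since $\gcd(a,d)=1$) shows that the two sides coincide, after which Proposition \ref{prp:sym-n-g} delivers the conclusion. The main obstacle is purely the bookkeeping of the summation; no conceptual difficulty arises once the structure of ${\rm Ape}_p(A;a)$ is in hand.
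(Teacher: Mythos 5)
Your proposal is correct and follows essentially the same route as the paper: both reduce the claim to the criterion $n_p(A)=\bigl(g_p(A)+\ell_0(p)+1\bigr)/2$ of Proposition \ref{prp:sym-n-g} and feed it with the Ap\'ery-set structure for $a$ even taken from \cite{KY} (stated before Proposition \ref{prp:p-hilbert-arithm}). The only difference is that you recompute $\ell_0(p)$, $g_p(A)$ and $n_p(A)$ directly from the specialised Ap\'ery set via Corollary \ref{cor-mp} at $p=a/2-1$ (your identification of the extremes, $\ell_0(p)=(a-2)(a+d)$ and $g_p(A)=(a-2)(a+2d)+d$, and the final identity do check out), whereas the paper quotes the closed formulas for $g_p$ and $n_p$ from \cite{KY} and observes that the criterion reduces to $(a-2-2p)p=0$.
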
 
\begin{proof}  
In \cite{KY}, it is shown that for $0\le p\le\fl{a/2}$, 
\begin{align*}
&g_p(a,a+d,a+2 d)=(a+2 d)p+\fl{\frac{a-2}{2}}a+(a-1)d\,,\\   
&n_p(a,a+d,a+2 d)\\
&=\begin{cases}
(2 a+2 d-1-p)p+\frac{(a-1)(a+2 d-1)}{4}&\text{if $a$ is odd};\\ 
(2 a+2 d-1-p)p+\frac{(a-1)(a+2 d-1)+1}{4}&\text{if $a$ is even}\,.
\end{cases} 
\end{align*} 
In addition, from the proof of Proposition \ref{prp:p-hilbert-arithm}, we can know that 
the least element in $S_p(a,a+d,a+2d)$ is given by $\ell_0(p)=2 p(a+d)$ when $p$ is odd or $p$ is even and $p\le(a/2-1)(a+2 d)/a$, by comparing another candidate $(a/2+p-1)(a+2 d)$. 
By using Proposition \ref{prp:sym-n-g},  when $a$ is odd, we have 
$$
(a-2-2 p)p=\frac{1}{2}\,, 
$$ 
which is impossible for integers $a$ and $p$.  When $a$ is even, we have $(a-2-2 p)p=0$. Hence, $p=a/2-1$, satisfying $p\le(a/2-1)(a+2 d)/a$.  
\end{proof}

\noindent 
{\it Remark.}  
When $a$ is odd or $a$ is even and $p>a/2$, any general result has not been known. 
\bigskip

At the end of this section, we consider a $p$-symmetric property in terms of the valuation. Let $R_0:=\mathbb{K}[[t^s|s\in S_p^{(0)}(A)]]$, $\bar{R}_0$ be the integral closure of $R_0$, $f$ be the algebraic conductor from $t^{\ell_0(p)}R_0$ to $\bar{R}_0$, $c_p=g_p+1$ ($p$-conductor). Since $R_0$ is the ring associated to a numerical semigroup $S_p^{(0)}(A)$, 
it is a discrete valuation ring with the valuation $v$. 
	
\begin{Lem} 
$f=\{x\in \bar{R}_0|v(x)\ge c_p+\ell_0(p)\}$. 
\label{lem:valuation}
\end{Lem}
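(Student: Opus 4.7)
The plan is to work inside $\bar{R}_0 = \mathbb{K}[[t]]$, using that the $t$-adic valuation $v$ of a nonzero power series equals the smallest exponent carrying a nonzero coefficient, and to reduce the statement to bookkeeping on supports. First I would observe that $t^{\ell_0(p)} R_0$ is precisely the $\mathbb{K}$-linear span of those formal power series whose set of nonzero exponents is contained in $\ell_0(p) + S_p^{(0)}(A)$, and that by definition of the algebraic conductor, $f = \{x \in \bar{R}_0 : x \bar{R}_0 \subseteq t^{\ell_0(p)} R_0\}$. Throughout I use that $\bar{R}_0 = \mathbb{K}[[t]]$, which follows from $\mathbb{N}_0 \setminus S_p^{(0)}(A)$ being finite (Lemma~\ref{lem:spanumsg}).

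For the containment $\supseteq$, I would take $x \in \bar{R}_0$ with $v(x) \geq c_p + \ell_0(p)$ and any $y \in \bar{R}_0$. Every exponent $e$ appearing in the support of $xy$ satisfies $e \geq v(xy) \geq v(x) \geq c_p + \ell_0(p)$, so $e - \ell_0(p) \geq c_p = g_p + 1$. Since every integer strictly larger than $g_p$ lies in $S_p^{(0)}(A)$ (by definition of $g_p$), we conclude $e - \ell_0(p) \in S_p^{(0)}(A)$, hence $xy \in t^{\ell_0(p)} R_0$, and $x \in f$.

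For the reverse containment $\subseteq$, I would specialize the defining condition of $f$ to the test elements $y = t^j$ for $j = 0, 1, 2, \ldots$, all of which lie in $\bar{R}_0$. Setting $n := v(x)$, the leading exponent of $xt^j$ is $n + j$, and the required inclusion $xt^j \in t^{\ell_0(p)} R_0$ forces $n + j - \ell_0(p) \in S_p^{(0)}(A)$ for every $j \geq 0$. Thus the entire arithmetic tail $\{n - \ell_0(p), n - \ell_0(p) + 1, n - \ell_0(p) + 2, \ldots\}$ is contained in $S_p^{(0)}(A)$. Because $g_p \notin S_p^{(0)}(A)$, we cannot have $n - \ell_0(p) \leq g_p$ (otherwise the choice $j := g_p - (n - \ell_0(p)) \geq 0$ would place $g_p$ inside $S_p^{(0)}(A)$), so $n - \ell_0(p) \geq c_p$, giving $v(x) \geq c_p + \ell_0(p)$.

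I anticipate no substantive obstacle; the heart of the argument is the identification of $t^{\ell_0(p)} R_0$ with a support condition on power series, after which both directions reduce to valuation bookkeeping. The only point worth emphasizing is that for the $\subseteq$ direction one genuinely needs the family of test elements $t^j$ for all $j \geq 0$, not merely $y = 1$, in order to convert a single exponent condition into the full conductor-threshold bound on $v(x)$.
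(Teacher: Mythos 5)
Your proof is correct and takes essentially the same route as the paper's: both directions reduce to viewing $t^{\ell_0(p)}R_0$ as the series supported on $\ell_0(p)+S_p^{(0)}(A)$, with the inclusion $f\subseteq\{v\ge c_p+\ell_0(p)\}$ obtained by testing against elements of every valuation (your $t^j$, the paper's ``arbitrariness of $r$'') and the reverse inclusion from the fact that every integer exceeding $g_p+\ell_0(p)$ lies in $\ell_0(p)+S_p^{(0)}(A)$. Your write-up is, if anything, more explicit than the paper's at the step where a valuation bound is converted into actual membership in $t^{\ell_0(p)}R_0$.
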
 
\begin{proof} 
For any $x\in f$ and $r\in \bar{R}_0$, we have $r x\in t^{\ell_0(p)}R_0$. So $v(r x)=v(x)+v(r)\in v(t^{\ell_0(p)}R_0)$. For $x=t^{\ell_0(p)}x'$ we get $v(r)+v(t^{\ell_0(p)})+v(x')\in v(t^{\ell_0(p)}R_0)=v(R_0)+v(t^{\ell_0(p)})$.  
By the arbitrariness of $r$ and $v(r)\ge 0$, we have $v(x)\ge c_p+\ell_0(p)$, that is, $f\subseteq \{x\in \bar{R}_0|v(x)\ge c_p+\ell_0(p)\}$. 

For any $x\in \bar{R}_0$ and $v(x)\ge c_p+\ell_0(p)$, we have $v(x)=v(r)$ for some $r\in t^{\ell_0(p)}R\subseteq R$. Then for any $r'\in \bar{R}_0$, we have $v(x r')=v(x)+v(r')=v(r)+v(r')\ge c_p+\ell_0(p)$. By the definition of $c_p$, we have $x r'\in t^{\ell_0(p)}R_0$. So, $f\supseteq \{x\in \bar{R}_0|v(x)\ge c_p+\ell_0(p)\}$. 
\end{proof} 

For simplicity, let $d_1$ and $d_2$ be the lengths of ideal of $R_0/f$ and of $R_0$-submodule of $\bar{R}_0/f$, respectively, and $d_3$ be the number of elements in $S_p(A)\cap\{1,2,\dots c_p+\ell_0(p)-1\}$. 

\begin{theorem}
$S_p(A)$ is $p$-symmetric if and only if $d_1=\frac{d_2}{2}$. 
\label{th:valuation}
\end{theorem}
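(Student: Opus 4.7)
The strategy is to read off both $d_1$ and $d_2$ as $\mathbb{K}$-vector space dimensions from the monomial bases of the local ring $R_0$ and its integral closure $\bar{R}_0=\mathbb{K}[[t]]$, and then match the resulting identity against Proposition~\ref{prp:sym-n-g}.

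First, Lemma~\ref{lem:valuation} identifies $f$ with $t^{c_p+\ell_0(p)}\bar{R}_0$, so $\bar{R}_0/f$ has $\mathbb{K}$-basis $\{t^n : 0\le n<c_p+\ell_0(p)\}$. Since $R_0$ is a local $\mathbb{K}$-algebra with residue field $\mathbb{K}$, the $R_0$-length of any module of finite length coincides with its $\mathbb{K}$-dimension; hence $d_2=c_p+\ell_0(p)=g_p(A)+\ell_0(p)+1$.

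Next I would interpret the ``ideal of $R_0/f$'' appearing in the theorem as the (unique) maximal ideal $\mathfrak{m}/f$ of the local ring $R_0/f$, where $\mathfrak{m}$ is the maximal ideal of $R_0=\mathbb{K}[[t^s : s\in S_p^{(0)}(A)]]$. The $\mathbb{K}$-basis of $\mathfrak{m}$ consists of the non-unit monomials $\{t^s : s\in S_p(A)\}$, so $\mathfrak{m}/f$ has basis $\{t^s : s\in S_p(A),\ s<c_p+\ell_0(p)\}$. Consequently
\[
d_1=\#\bigl(S_p(A)\cap\{1,2,\dots,c_p+\ell_0(p)-1\}\bigr)=d_3,
\]
which recovers the quantity $d_3$ singled out just before the theorem.

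The final step is an elementary count. The set $\{1,2,\dots,g_p(A)+\ell_0(p)\}$ has $g_p(A)+\ell_0(p)$ elements and partitions into $S_p(A)\cap\{1,\dots,g_p(A)+\ell_0(p)\}$ and $G_p(A)\setminus\{0\}$ (using that $0\in G_p(A)$ whenever $p\ge1$, and that $G_p(A)\subseteq\{0,1,\dots,g_p(A)\}$). This yields $d_1+(n_p(A)-1)=g_p(A)+\ell_0(p)$, whence $d_1=g_p(A)+\ell_0(p)+1-n_p(A)$. Thus $d_1=d_2/2$ is equivalent to $n_p(A)=(g_p(A)+\ell_0(p)+1)/2$, which by Proposition~\ref{prp:sym-n-g} is precisely the $p$-symmetric condition.

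The only genuine obstacle is fixing the intended meaning of ``ideal of $R_0/f$'': once one takes this to be the maximal ideal $\mathfrak{m}/f$ (so that $d_1=d_3$), the argument reduces to a single bookkeeping identity followed by a direct appeal to Proposition~\ref{prp:sym-n-g}.
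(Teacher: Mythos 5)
Your computation of $d_2=g_p(A)+\ell_0(p)+1$ agrees with the paper, and your overall plan (convert both lengths into counts of semigroup elements and then quote the numerical characterization of $p$-symmetry) is the same as the paper's, which uses maximal chains where you use monomial bases. The genuine problem is your treatment of $d_1$. The paper's proof does not read $d_1$ as the length of $\mathfrak{m}/f$: it builds the maximal chain $R_0\supset R_1\supset\cdots\supset R_{d_3}\supset f$ and sets $d_1=d_3+1$, i.e.\ $d_1$ is the length of $R_0/f$ itself, which at $p=0$ is exactly the classical Herzog--Kunz/Ap\'ery quantity (the unit class is counted). So your step ``$d_1=d_3$'' is not a harmless fixing of an ambiguous phrase but a shift by one that changes the statement being proved: with the paper's $d_1=d_3+1$ your bookkeeping identity gives $d_1=g_p(A)+\ell_0(p)+2-n_p(A)$, and $d_1=d_2/2$ is then \emph{not} equivalent to the condition of Proposition \ref{prp:sym-n-g}.

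Moreover, even under your reading the argument silently excludes $p=0$, and there your version of the statement is false: for $S=\langle 2,3\rangle$ one has $\ell_0(0)=0$, $f=\{x\in\bar R_0\mid v(x)\ge 2\}$, the length of $\mathfrak m/f$ is $d_3=0$ and $d_2=2$, yet $S$ is symmetric; the correct $p=0$ criterion needs the length of $R_0/f$, which is $1=d_2/2$. (Your partition count uses $0\in G_p(A)$, which holds only for $p\ge1$.) Conversely, for $p\ge1$ your count $d_3=g_p(A)+\ell_0(p)+1-n_p(A)$ is correct, and it is then the paper's intermediate claim ``$p$-symmetric iff $d_3=\frac{\ell_0(p)+g_p-1}{2}$'' (hence its $d_1=d_3+1=d_2/2$) that is off by one: for $A=\{2,3\}$, $p=1$, the set $S_1=\{6,8,9,\mapsto\}$ is $1$-symmetric with $d_3=7$, $d_2=14$, while the length of $R_0/f$ is $8$. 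So your proof and the paper's proof cannot both establish Theorem \ref{th:valuation} as stated; the discrepancy comes from whether $0$ (which lies in $S_0$ but in $G_p(A)$ for $p\ge1$) is counted in $d_1$. A complete argument must pin down $d_1$ uniformly --- e.g.\ as the length modulo $f$ of the ideal of $R_0$ generated by $\{t^s: s\in S_p(A)\}$, which is $R_0$ itself at $p=0$ and $\mathfrak m$ for $p\ge1$ --- and treat the two cases explicitly. As written, your proposal proves a statement about $d_3$, not about the paper's $d_1$, and omits the case $p=0$, so it does not close the theorem.
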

\begin{proof}
By Proposition \ref{prp:p-sym} together with the facts that all the elements in $\{1,\dots i_p-1\}$ are in $G_p(A)$ and $\{g_p+1,\dots \ell_0(p)+g_p-1\}$ are all in $S_p(A)$, $S_p(A)$ is $p$-symmetric if and only if $d_3=\frac{\ell_0(p)+g_p-1}{2}$. 

Consider the ideal chain $R_0\supset R_1\supset R_2\dots \supset R_{d_3}\supset f$, where $R_i=\{r\in R_0|v(r)\ge v_i\}$ and $v_1<v_2<\dots <v_{d_3}$ are the elements in $S_p(A)\cap\{1,2,\dots c_p+\ell_0(p)-1\}$ arranged in ascending order. This sequence is maximal because if we adjoin an element $r\in R_0$ of value $v_{i-1}$ to $R_i$, we get all of $R_{i-1}$. So, $d_1=d_3+1$. \\
Similarly consider the maximal $R_0$-submodule chain of $\bar{R}_0/f$: $\bar{R}_0=b_0\supset b_1\supset b_2\dots \supset b_{\ell_0(p)+g_p+1}=f$ where $b_i=\{r\in \bar{R}_0|v(r)\ge i\}$. So we have $d_2=\ell_0(p)+g_p+1$. 
Hence, $S_p(A)$ is $p$-symmetric if and only if $d_1-1=\frac{d_2-2}{2}$.
\end{proof}

\subsection{$p$-pseudo-symmetric semigroup} 

For a non-negative integer $p$, let $S_p(A)$ be a $p$-numerical semigroup. $x\in\mathbb Z$ is called a {\it $p$-pseudo-Frobenius number} if $x\not\in S_p(A)$ and $x+s-\ell_0(p)\in S_p(A)$ for all $s\in S_p(A)\backslash\{\ell_0(p)\}$, where $\ell_0(p)$ is the least element of $S_p(A)$, so is of ${\rm Ape}_p(A;a)$ with $a=\min(A)$. The set of $p$-pseudo-Frobenius numbers is denoted by ${\rm PF}_p(A)$. The {\it $p$-type} is denoted by $t_p(A):=\#\bigl({\rm PF}_p(A)\bigr)$. Notice that the $p$-Frobenius number is given by $g_p(A)=\max\bigl({\rm PF}_p(A)\bigr)$. 

\noindent 
{\bf Example.} 
When $A=\{6,17,28\}$ and $p=5$, 
\begin{multline*}
S_5(A)=\{130=\ell_0(5),136,142,147,148,152,153,154,158,159,160,\\
164,165,166,\underbrace{168,\dots,172},\underbrace{174,\dots,178},180,\mapsto\}\,, 
\end{multline*} 
so, 
\begin{multline*}
\bigl(S_5(A)-\ell_0(5)\bigr)\backslash\{0\}=\{6,12,17,18,22,23,24,28,29,30,\\
34,35,36,\underbrace{38,\dots,42},\underbrace{44,\dots,48},50,\mapsto\}\,.
\end{multline*} 
Hence, ${\rm PF}_5(6,17,28)=\{163,179\}$, and $t_5(6,17,28)=2$.

For $p\ge 0$, the $p$-numerical semigroup $S_p$, which is $p$-generated from $A$, is called {\it $p$-pseudo-symmetric} if for all $x\in\mathbb Z\backslash S_p$ with $x\ne\bigl(\ell_0(p)+g_p(A)\bigr)/2\in\mathbb Z$, $\ell_0(p)+g_p(A)-x\in S_p$, where $\ell_0(p)$ is the least element of $S_p$. 
When $p=0$, "$0$-pseudo-symmetry" is "pseudo-symmetry".  

\noindent 
{\bf Example.} 
When $A=\{3,7,11\}$, $S_p(A)$ is $p$-pseudo-symmetric for $p=0,3,4,5$ and $p$-symmetric for $p=1,2$. For example, by $G_4=\{0,\dots,34,37\}$ and $S_4=\{35,36,38,\mapsto\}$, for any $x\in S_4$, $72-x\in G_4\cup\mathbb Z^{-}$ except the element $36=72/2$. By $G_5=\{0,\dots,38,40,41\}$ and $S_5=\{39,42,43,\mapsto\}$, for any $x\in S_5$, $80-x\in G_5\cup\mathbb Z^{-}$ except the element $40=80/2$.

For simplicity, put the $p$-Frobenius number as $g:=g_p(A)$ and the $p$-multiplicity as $\ell:=\ell_0(p)$ ($p\ge 1$) with $\ell_0(0)=0$. Denote the $p$-Ap\'ery set by ${\rm Ape}_p(A;a)$ with $a=\min(A)$. 

\begin{theorem} 
For a non-negative integer $p$, the following conditions are equivalent:  
\begin{enumerate} 
\item[{\rm (i)}] $S_p$, which is $p$-generated from $A$, is $p$-pseudo-symmetric 
\item[{\rm (ii)}] 
$$ 
m_{(g+\ell)/2+j}^{(p)}+m_{(g+\ell)/2-j}^{(p)}=g+\ell+
\begin{cases}
2 a&\text{if $j=0$ and $(g+\ell)/2\in G_p(A)$};\\ 
0&\text{if $j=0$ and $(g+\ell)/2\in S_p(A)$};\\ 
a&\text{if $j>0$}. 
\end{cases}
$$ 
\item[{\rm (iii)}] $\displaystyle n_{p}(A)=\dfrac{g+\ell}{2}+\begin{cases}
1&\text{if $(g+\ell)/2\in G_p(A)$};\\ 
0&\text{if $(g+\ell)/2\in S_p(A)$}. 
\end{cases}$
\end{enumerate}
\label{th:ps-sym-ap}
\end{theorem}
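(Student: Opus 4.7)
The plan is to prove the chain $(\mathrm{i})\Leftrightarrow(\mathrm{ii})\Leftrightarrow(\mathrm{iii})$, modelled on Lemma~\ref{lem:sym-apery-m} and Proposition~\ref{prp:sym-n-g} for the $p$-symmetric case. Throughout, set $a=\min(A)$ and use the decomposition $x=m_i^{(p)}+ka$ ($0\le i\le a-1$, $k\in\mathbb Z$), under which $x\in S_p(A)$ iff $k\ge 0$. The involution $\sigma(x):=g+\ell-x$ sends residue $i\pmod a$ to residue $g+\ell-i\pmod a$, and for each $j$ pairs the Apéry representatives of the residue classes $i_\pm:=(g+\ell)/2\pm j\pmod a$. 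The very statement of $(\mathrm{ii})$ implicitly assumes $g+\ell$ even, since otherwise $(g+\ell)/2\notin\mathbb Z$ and pseudo-symmetry collapses to $p$-symmetry, handled by Proposition~\ref{prp:sym-n-g}.

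For $(\mathrm{i})\Leftrightarrow(\mathrm{ii})$, I would argue residue class by residue class. Fix $j>0$; then both $i_+$ and $i_-$ are distinct from the midpoint class $c:=(g+\ell)/2\pmod a$. Applying pseudo-symmetry to the non-midpoint gap $m_{i_+}^{(p)}-a\in\mathbb Z\setminus S_p$ gives $g+\ell-m_{i_+}^{(p)}+a\in S_p$ with residue $i_-$, forcing $m_{i_+}^{(p)}+m_{i_-}^{(p)}\le g+\ell+a$. Since the symmetric application from the other side yields the same bound, equality follows by a counting argument parallel to the size-equality step of Proposition~\ref{prp:p-sym}: the total size of $G_p\cap[\ell,g]$ exceeds that of $S_p\cap[\ell,g]$ by the single midpoint contribution, and summing the per-class inequalities then saturates them all. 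For $j=0$, $\sigma$ acts within the single class $c$; a case split on whether $(g+\ell)/2$ lies above or below $m_c^{(p)}$ (equivalently, whether $(g+\ell)/2\in S_p$ or $\in G_p$) shows that pseudo-symmetry pins $(g+\ell)/2$ to coincide with $m_c^{(p)}$ in the former case and with $m_c^{(p)}-a$ in the latter, yielding $2m_c^{(p)}=g+\ell$ and $2m_c^{(p)}=g+\ell+2a$ respectively. The converse $(\mathrm{ii})\Rightarrow(\mathrm{i})$ reads off directly: the stated Apéry identities place every threshold exactly where needed for $\sigma$ to interchange $S_p$ with $\mathbb Z\setminus S_p$ up to the midpoint exception.

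For $(\mathrm{ii})\Leftrightarrow(\mathrm{iii})$, I would substitute into formula~(\ref{mp-n}). Using the re-indexing
\begin{equation*}
2\sum_{i=0}^{a-1}m_i^{(p)}=\sum_{j=0}^{a-1}\bigl(m_{(g+\ell)/2+j}^{(p)}+m_{(g+\ell)/2-j}^{(p)}\bigr),
\end{equation*}
valid because each residue class is hit exactly twice as $j$ ranges over $\{0,1,\ldots,a-1\}$, the right-hand side collapses under $(\mathrm{ii})$ to $a(g+\ell)+\epsilon_0+a(a-1)$, where $\epsilon_0=0$ in the $S_p$-midpoint case and $\epsilon_0=2a$ in the $G_p$-midpoint case. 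Substituting back into~(\ref{mp-n}) yields $n_p(A)=(g+\ell)/2+\epsilon_0/(2a)$, which is exactly $(\mathrm{iii})$. For the reverse, following the template of Proposition~\ref{prp:sym-n-g}, I would write $m_{(g+\ell)/2+j}^{(p)}+m_{(g+\ell)/2-j}^{(p)}=g+\ell+ac_j$ for suitable integers $c_j$, treat $c_0$ separately according to the midpoint location, and combine the aggregated identity supplied by~$(\mathrm{iii})$ with the a priori lower bounds $c_j\ge 1$ (for $j>0$) to force each $c_j=1$, recovering~$(\mathrm{ii})$.

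The main obstacle I anticipate is the equality step in the $j>0$ analysis of $(\mathrm{i})\Rightarrow(\mathrm{ii})$: the pseudo-symmetric condition only directly gives $m_{i_+}^{(p)}+m_{i_-}^{(p)}\le g+\ell+a$, and closing the gap to equality requires the global counting ingredient from Proposition~\ref{prp:p-sym}. A secondary subtlety is the even-$a$ case, where $j=a/2$ produces an extra self-paired residue class disjoint from the midpoint; one must verify that it falls under the generic identity $2m_{c+a/2}^{(p)}=g+\ell+a$ from the $j>0$ analysis. Finally, justifying the lower bounds $c_j\ge 1$ needed for the converse direction $(\mathrm{iii})\Rightarrow(\mathrm{ii})$ is the most delicate purely combinatorial step, handled by combining the Apéry description $m_{i_\pm}^{(p)}-a\notin S_p$ with the pairing structure imposed by $\sigma$.
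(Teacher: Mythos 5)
Your plan reproduces the paper's skeleton for the forward half of (ii)$\Leftrightarrow$(iii) (same re-indexing of the Ap\'ery pairs and substitution into (\ref{mp-n})), but the two steps you defer are exactly where the content of Theorem \ref{th:ps-sym-ap} lies, and neither closes as described. In (i)$\Rightarrow$(ii) you extract from pseudo-symmetry only the upper bounds $m_{(g+\ell)/2+j}^{(p)}+m_{(g+\ell)/2-j}^{(p)}\le g+\ell+a$ and then appeal to the counting claim that $\#\bigl(G_p\cap[\ell,g]\bigr)$ exceeds $\#\bigl(S_p\cap[\ell,g]\bigr)$ by the midpoint contribution. That claim is unproven and, as stated, false when $(g+\ell)/2\in S_p$: the paper's own example $A=\{3,10,17\}$, $p=4$ is $4$-pseudo-symmetric with $S_4=\{50,53,54,56,57,59,\mapsto\}$, $g=58$, $\ell=50$, and there $S_4\cap[50,58]$ has five elements while $G_4\cap[50,58]$ has four. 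More importantly, even the correctly signed count does not follow from the definition of $p$-pseudo-symmetry, which constrains gaps only: the involution $x\mapsto g+\ell-x$ injects $G_p\cap[\ell,g]$ minus the midpoint into $S_p\cap[\ell,g]$, which bounds the number of gaps from \emph{above} --- the same direction as your per-class upper bounds --- so nothing saturates. The missing half is that no two elements of $S_p$ other than the midpoint sum to $g+\ell$; for $p\ge1$ this is not free from closure (from $x,\,g+\ell-x\in S_p$ one only gets $g+\ell\in S_p$, which is allowed since $g+\ell>g_p$, unlike the $p=0$ case). The paper supplies precisely this ingredient: assuming $m_{(g+\ell)/2+j}^{(p)}+m_{(g+\ell)/2-j}^{(p)}\le g+\ell$, it exhibits $m_{(g+\ell)/2+j}^{(p)}$ and $m_{(g+\ell)/2-j}^{(p)}+\rho a$, two elements of $S_p$ summing to $g+\ell$, takes this to contradict (i), and then pins $\rho=1$ by the same move as your upper bound. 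Your $j=0$ analysis (pinning the midpoint-class Ap\'ery element to $(g+\ell)/2$ when the midpoint lies in $S_p$) silently needs the same ingredient.

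The second gap is (iii)$\Rightarrow$(ii): you call $c_j\ge1$ ($j>0$) ``a priori'' and propose to obtain it from the Ap\'ery description together with ``the pairing structure imposed by $\sigma$'', but in that direction no pairing structure is available --- it is the conclusion --- and $m_{(g+\ell)/2+j}^{(p)}+m_{(g+\ell)/2-j}^{(p)}\ge g+\ell+a$ is exactly the non-automatic inequality just discussed (the $p=0$ argument ``$g-m_u\notin S$'' uses $g\notin S$ plus closure and does not transfer). As written this step is circular; it must be argued, not assumed. By contrast, your worry about the self-paired class $j=a/2$ for even $a$ is harmless (it falls under the generic $j>0$ identity), and your one-line (ii)$\Rightarrow$(i) can be made rigorous by noting that in each residue class the elements of $S_p$ are exactly $m_i^{(p)}+a\mathbb{N}_0$, so (ii) lets you read off the status of $g+\ell-x$ from that of $x$; the paper instead argues by contradiction, decomposing an offending $\mu$ and $g+\ell-\mu$ through the Ap\'ery set.
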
 

\begin{proof} 
\noindent 
[(i)$\Rightarrow$(ii)]  
When $j=0$, by $m_{(g+\ell)/2}^{(p)}\equiv(g+\ell)/2\pmod{a}$, the result is clear. 
Notice that 
\begin{align*}
m_{(g+\ell)/2+j}^{(p)}+m_{(g+\ell)/2-j}^{(p)}&\equiv (g+\ell)/2+j+(g+\ell)/2-j\\
&=g+\ell\pmod{a}\,.
\end{align*} 
When $j>0$, suppose that $m_{(g+\ell)/2+j}^{(p)}+m_{(g+\ell)/2-j}^{(p)}\le g+\ell$. Then there exists a non-negative integer $\rho$ such that $m_{(g+\ell)/2+j}^{(p)}+m_{(g+\ell)/2-j}^{(p)}=g+\ell-\rho a$. So, $m_{(g+\ell)/2+j}^{(p)}+\bigl(m_{(g+\ell)/2-j}^{(p)}+\rho a\bigr)=g+\ell$ with $m_{(g+\ell)/2+j}^{(p)},m_{(g+\ell)/2-j}^{(p)}+\rho a\in S_p(A)$ contradicts the assumption (i).  
Thus, by $m_{(g+\ell)/2+j}^{(p)}+m_{(g+\ell)/2-j}^{(p)}>g+\ell$, there exists a positive integer $\rho$ such that $m_{(g+\ell)/2+j}^{(p)}+m_{(g+\ell)/2-j}^{(p)}=g+\ell+\rho a$. 
As $m_{(g+\ell)/2-j}^{(p)}-a\not\in S_p(A)$, by (i) we get $m_{(g+\ell)/2+j}^{(p)}-(\rho-1)a=g+\ell-\bigl(m_{(g+\ell)/2-j}^{(p)}-a\bigr)\in S_p(A)$. Hence, only the possibility is that $\rho=1$.  
 
\noindent 
[(ii)$\Rightarrow$(i)]  
Suppose that $S_p(A)$ is not $p$-pseudo-symmetric. Then there exists an integer $\mu$ such that $\mu\in S_p(A)$ and $\mu\ne g+\ell-\mu\in S_p(A)$ or $\mu\not\in S_p(A)$ and $\mu\ne g+\ell-\mu\not\in S_p(A)$. 
If $\mu\in S_p(A)$ and $\mu\ne g+\ell-\mu\in S_p(A)$, by $m_{(g+\ell)/2+j}^{(p)}+m_{(g+\ell)/2-j}^{(p)}\equiv g+\ell\pmod{a}$, there exists a non-negative integer $j_0$ such that 
$$
m_{(g+\ell)/2+j_0}^{(p)}\equiv\mu\quad\hbox{and}\quad m_{(g+\ell)/2-j_0}^{(p)}\equiv g+\ell-\mu\pmod{a}\,. 
$$ 
And there exist non-negative integers $h_1$ and $h_2$ such that 
\begin{align*}
g+\ell&=\mu+(g+\ell-\mu)\\
&=(m_{(g+\ell)/2+j_0}^{(p)}+h_1 a)+(m_{(g+\ell)/2-j_0}^{(p)}+h_2 a)\\
&=(h_1+h_2)a+g+\ell+\begin{cases}
2 a&\text{if $j_0=0$ and $(g+\ell)/2\in G_p(A)$};\\ 
0&\text{if $j_0=0$ and $(g+\ell)/2\in S_p(A)$};\\ 
a&\text{if $j_0>0$}. 
\end{cases}
\end{align*}
By the assumption of (ii), only the possibility is that $h_1=h_2=0$ and $j_0=0$ with $(g+\ell)/2\in S_p(A)$. Thus, $m_{(g+\ell)/2}^{(p)}=\mu=g+\ell-\mu$,  
yielding the contradiction. 

If $\mu\in G_p(A)$ and $\mu\ne g+\ell-\mu\in G_p(A)$, there exists a non-negative integer $j_1$ and there exist positive integers $h_3$ and $h_4$ such that 
\begin{align*}
g+\ell&=\mu+(g+\ell-\mu)\\
&=(m_{(g+\ell)/2+j_1}^{(p)}-h_3 a)+(m_{(g+\ell)/2-j_1}^{(p)}-h_4 a)\\
&=-(h_3+h_4)a+g+\ell+\begin{cases}
2 a&\text{if $j_1=0$ and $(g+\ell)/2\in G_p(A)$};\\ 
0&\text{if $j_1=0$ and $(g+\ell)/2\in S_p(A)$};\\ 
a&\text{if $j_1>0$}. 
\end{cases}
\end{align*}
By the assumption of (ii), only the possibility is that $h_3=h_4=1$ and $j_1=0$ with $(g+\ell)/2\in G_p(A)$. Thus, $m_{(g+\ell)/2}^{(p)}=\mu=g+\ell-\mu$, 
yielding the contradiction. 

\noindent 
[(ii)$\Leftrightarrow$(iii)] 
By (\ref{mp-n}), we have 
\begin{align*}
2 n_p(A)&=\frac{2}{a}\sum_{i=0}^{a-1}m_i^{(p)}-a+1\\
&=\frac{1}{a}\sum_{j=0}^{a-1}\bigl(m_{(g+\ell)/2+j}^{(p)}+m_{(g+\ell)/2-j}^{(p)}\bigr)-a+1\\ 
&=g+\ell-a+1+
\begin{cases}
a+1&\text{if $(g+\ell)/2\in G_p(A)$}\\ 
a-1&\text{if $(g+\ell)/2\in S_p(A)$} 
\end{cases}\\
&=g+\ell+
\begin{cases}
2&\text{if $(g+\ell)/2\in G_p(A)$}\\ 
0&\text{if $(g+\ell)/2\in S_p(A)$} 
\end{cases}
\,. 
\end{align*} 
On the other hand, by (\ref{mp-n}) again, we have 
\begin{align*}
&g+\ell+
\begin{cases}
2&\text{if $(g+\ell)/2\in G_p(A)$}\\ 
0&\text{if $(g+\ell)/2\in S_p(A)$}
\end{cases}\\ 
&=2 n_p(A)=\frac{1}{a}\sum_{j=0}^{a-1}\bigl(m_{(g+\ell)/2+j}^{(p)}+m_{(g+\ell)/2-j}^{(p)}\bigr)-a+1\\ 
&=\frac{1}{a}\sum_{i=0}^{a-1}(g+\ell+a c_i)-a+1\\ 
&=g+\ell-a+1+\frac{1}{a}\sum_{i=0}^{a-1}c_i\,. 
\end{align*} 
Thus, 
$$
\sum_{i=0}^{a-1}c_i=a+
\begin{cases}
1&\text{if $(g+\ell)/2\in G_p(A)$}\\ 
-1&\text{if $(g+\ell)/2\in S_p(A)$}\,. 
\end{cases}
$$ 
Hence, $c_i=1$ ($i\ne 0$) and 
$$
c_0=\begin{cases}
2&\text{if $(g+\ell)/2\in G_p(A)$}\\ 
0&\text{if $(g+\ell)/2\in S_p(A)$}\,. 
\end{cases}
$$ 
\end{proof}

When $A=\{6,7,17,28\}$, $S_p$ is not $p$-symmetric for $p=0,1,\dots,11,13,14,16$ and is $p$-symmetric for $p=15$. $S_{12}$ is $12$-pseudo-symmetric, because $G_{12}=\{0,\dots,85,88\}$ and $S_{12}=\{86,87,89,\mapsto\}$, so ${\rm Ape}_{12}=\{m_2^{(12)}=86,m_3^{(12)}=87,m_5^{(12)}=89,m_0^{(12)}=90,m_1^{(12)}=91,m_4^{(12)}=94\}$. Hence, by $m_{g+\ell}^{(12)}=m_{174}^{(12)}=m_{0}^{(12)}=90$ and $m_{(g+\ell)/2}^{(12)}=m_{87}^{(12)}=m_{3}^{(12)}=87$, we have $m_3^{(12)}+m_3^{(12)}=174=g+\ell$, $m_4^{(12)}+m_32^{(12)}=m_5^{(12)}+m_1^{(12)}=m_0^{(12)}+m_0^{(12)}=180=g+\ell+a$, where $g=88$, $\ell=86$ and $a=6$. 
By $(g+\ell)/2=87\in S_p$, we have 
$$
n_{12}(A)=\frac{g+\ell}{2}=87\,. 
$$ 

$S_{17}$ is $17$-pseudo-symmetric, because $G_{17}=\{0,\dots,96,99,100,101\}$ and $S_{17}=\{97,98,102,\mapsto\}$, so ${\rm Ape}_{17}=\{m_1^{(17)}=97,m_2^{(17)}=98,m_0^{(17)}=102,m_3^{(17)}=105,m_4^{(17)}=106,m_5^{(17)}=107\}$. Hence, by $m_{g+\ell}^{(17)}=m_{198}^{(17)}=m_{0}^{(17)}=102$ and $m_{(g+\ell)/2}^{(17)}=m_{99}^{(17)}=m_{3}^{(17)}=105$, we have $m_3^{(17)}+m_3^{(17)}=210=g+\ell+2 a$, $m_4^{(17)}+m_2^{(17)}=m_5^{(17)}+m_1^{(17)}=m_0^{(17)}+m_0^{(17)}=204=g+\ell+a$, where $g=101$, $\ell=97$ and $a=6$. 
By $(g+\ell)/2=87\in G_p$, we have 
$$
n_{17}(6,7,17,28)=\frac{g+\ell}{2}+1=99+1=100\,. 
$$

\begin{Cor}  
Let $S_p(A)$ be a $p$-numerical semigroup. The following conditions are equivalent.  
\begin{enumerate} 
\item[\rm (i)] $S_p$ is $p$-symmetric. 
\item[\rm (ii)] ${\rm PF}_p(A)=\{g_p(A)\}$ with $g_p(A)\not\equiv \ell_0(p)\pmod 2$. 
\item[\rm (iii)] $t_p(A)=1$ with $g_p(A)\not\equiv \ell_0(p)\pmod 2$. 
\end{enumerate} 
\label{cor:p-sym} 
\end{Cor}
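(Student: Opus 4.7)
The equivalence (ii)$\Leftrightarrow$(iii) is immediate: since $g_p(A)\in{\rm PF}_p(A)$ always and $t_p(A)=\#{\rm PF}_p(A)$, we have $t_p(A)=1$ precisely when ${\rm PF}_p(A)=\{g_p(A)\}$. The substance is therefore the equivalence (i)$\Leftrightarrow$(ii).

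For (i)$\Rightarrow$(ii), I would first apply Proposition \ref{prp:sym-n-g} to conclude $n_p(A)=\bigl(g_p(A)+\ell_0(p)+1\bigr)/2$; integrality of $n_p(A)$ forces $g_p(A)+\ell_0(p)$ to be odd, i.e.\ $g_p(A)\not\equiv\ell_0(p)\pmod 2$. To show ${\rm PF}_p(A)\subseteq\{g_p(A)\}$, take any $x\in{\rm PF}_p(A)$; then $x\notin S_p(A)$, and by $p$-symmetry $s:=\ell_0(p)+g_p(A)-x\in S_p(A)$. If $s=\ell_0(p)$ then $x=g_p(A)$, as desired. Otherwise $s\in S_p(A)\setminus\{\ell_0(p)\}$, so the defining property of a $p$-pseudo-Frobenius number yields $x+s-\ell_0(p)=g_p(A)\in S_p(A)$, contradicting $g_p(A)\notin S_p(A)$.

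For the reverse implication (ii)$\Rightarrow$(i), my plan is to recover $n_p(A)=\bigl(g_p(A)+\ell_0(p)+1\bigr)/2$ and then reuse Proposition \ref{prp:sym-n-g}. I would work through the $p$-Apéry set. First, for any $x\in{\rm PF}_p(A)$ the choice $s=\ell_0(p)+a$ (with $a=\min A$, which lies in $S_p(A)$ because any representation of $\ell_0(p)$ extends to one of $\ell_0(p)+a$ by adjoining a copy of $a_1=a$) in the defining condition forces $x+a\in S_p(A)$, whence $x+a\in{\rm Ape}_p(A;a)$. So ${\rm PF}_p(A)=\{g_p(A)\}$ singles out $g_p(A)+a$ as the unique "maximal" Apéry element under the natural climbing relation. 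I would then show that $\sigma\colon w\mapsto\ell_0(p)+g_p(A)+a-w$ is an involution of ${\rm Ape}_p(A;a)$. For each $w\in{\rm Ape}_p$, the element $w-a$ is a gap; if $w-a=g_p(A)$ then $\sigma(w)=\ell_0(p)\in{\rm Ape}_p$, and otherwise $w-a\notin{\rm PF}_p(A)$, so by the uniqueness of ${\rm PF}_p$ there is a finite chain of gaps $w-a=y_0<y_1<\dots<y_n=g_p(A)$ with $y_{i+1}=y_i+(s_i-\ell_0(p))$ and $s_i\in S_p(A)\setminus\{\ell_0(p)\}$; this chain telescopes to place $\sigma(w)$ in $S_p(A)$, and an Apéry-minimality check (if $\sigma(w)-a\in S_p(A)$, then combining with $w\in S_p(A)$ would contradict $w\in{\rm Ape}_p$ through a residue argument) shows $\sigma(w)-a\notin S_p(A)$. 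Parity rules out any fixed point of $\sigma$, so ${\rm Ape}_p$ pairs up cleanly, and Lemma \ref{lem:sym-apery} then delivers $p$-symmetry.

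The main obstacle I expect in this plan is the telescoping step inside (ii)$\Rightarrow$(i): $S_p(A)-\ell_0(p)$ is not closed under addition, so one cannot naively sum the step sizes $s_i-\ell_0(p)$ and land back in $S_p(A)-\ell_0(p)$. My intended resolution is an induction on the chain length $n$, exploiting Lemma \ref{lem:spanumsg} (that $S_p^{(0)}(A)$ is a numerical semigroup) to rewrite each accumulated sum as a genuine semigroup addition $s_0+\text{(correction)}$ and thereby keep the partial sums inside $S_p(A)$; the parity hypothesis is crucial for guaranteeing that no Apéry element is forced to pair with itself, which would spoil the involution argument.
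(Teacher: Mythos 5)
The parts of your argument that work are (ii)$\Leftrightarrow$(iii) and (i)$\Rightarrow$(ii): the latter correctly extracts the parity from Proposition \ref{prp:sym-n-g} and the inclusion ${\rm PF}_p(A)\subseteq\{g_p(A)\}$ from the definitions. The genuine gap is in (ii)$\Rightarrow$(i), exactly at the step you flag and do not close. Your chain out of a non-maximal gap has increments $s_i-\ell_0(p)$ with $s_i\in S_p(A)\setminus\{\ell_0(p)\}$, and to place $\sigma(w)=\ell_0(p)+\sum_i\bigl(s_i-\ell_0(p)\bigr)$ in $S_p(A)$ your induction needs, at each step, that $u\in S_p(A)$ and $s\in S_p(A)$ imply $u+s-\ell_0(p)\in S_p(A)$. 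Lemma \ref{lem:spanumsg} does not give this: it only yields closure under genuine addition $u+s$, and for $p\ge 1$ membership in $S_p(A)$ is not upward closed (for $A=\{3,10,17\}$ one has $20\in S_1$ but $21,22\notin S_1$), so subtracting $\ell_0(p)$ after adding can in principle drop the partial sums out of $S_p(A)$; nothing in the proposed "correction" rewriting establishes the needed containment. The same unproved fact resurfaces in your "Ap\'ery-minimality check": excluding $\sigma(w)-a\in S_p(A)$ amounts to saying no two elements of $S_p(A)$ sum to $\ell_0(p)+g_p(A)$, which is not a residue triviality but essentially the symmetry statement being proved. Finally, "parity rules out any fixed point of $\sigma$" is incorrect when $a$ is odd: then $\ell_0(p)+g_p(A)+a$ is even, and Lemma \ref{lem:sym-apery} actually requires the middle Ap\'ery element to pair with itself.

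The repair is to abandon chains with steps in $S_p(A)-\ell_0(p)$ and use Proposition \ref{prp:pf-max} instead: ${\rm PF}_p(A)={\rm Maximals}_{\le_S}(G_p)$. If ${\rm PF}_p(A)=\{g_p(A)\}$ and $x\in G_p$ with $x\ne g_p(A)$, then $x$ is not maximal, so there is a finite strictly increasing chain of gaps $x=y_0,y_1,\dots,y_n=g_p(A)$ with every increment $y_{i+1}-y_i\in S_p(A)$; these increments telescope inside $S_p(A)$ because $S_p(A)$ is closed under addition (a sum of numbers having at least $p+1$ representations again has at least $p+1$ representations), giving $g_p(A)-x\in S_p(A)$ and hence $\ell_0(p)+g_p(A)-x\in S_p(A)$. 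Together with the case $x=g_p(A)$, where $\ell_0(p)+g_p(A)-x=\ell_0(p)\in S_p(A)$, and the trivial case $x<0$, this is precisely the definition of $p$-symmetry; no Ap\'ery involution, no appeal to Lemma \ref{lem:sym-apery}, and no use of the parity hypothesis is needed in this direction, the parity being relevant only for (i)$\Rightarrow$(ii).
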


\begin{Cor}  
Let $S_p(A)$ be a $p$-numerical semigroup. The following conditions are equivalent.  
\begin{enumerate} 
\item[\rm (i)] $S_p$ is $p$-pseudo-symmetric. 
\item[\rm (ii)] $\displaystyle {\rm PF}_p(A)=
\begin{cases}
\{g_p(A),\bigl(g_p(A)+\ell_0(p)\bigr)/2\}&\text{if $\bigl(g_p(A)+\ell_0(p)\bigr)/2\in G_p(A)$};\\
\{g_p(A)\}&\text{if $\bigl(g_p(A)+\ell_0(p)\bigr)/2\in S_p(A)$}. 
\end{cases}$  
\item[\rm (iii)] $\displaystyle t_p(A)=
\begin{cases}
2&\text{if $\bigl(g_p(A)+\ell_0(p)\bigr)/2\in G_p(A)$};\\
1&\text{if $\bigl(g_p(A)+\ell_0(p)\bigr)/2\in S_p(A)$}. 
\end{cases}$ 
\end{enumerate} 
\label{cor:p-pseudo-sym} 
\end{Cor}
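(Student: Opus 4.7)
Plan. The equivalence between (ii) and (iii) is immediate by counting the elements of the pseudo-Frobenius set. For the equivalence of (i) and (ii), I combine the $p$-pseudo-symmetric hypothesis with the Ap\'ery-set structure supplied by Theorem~\ref{th:ps-sym-ap}.

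For the direction (i)$\Rightarrow$(ii), write $g=g_p(A)$ and $\ell=\ell_0(p)$. First, $g\in{\rm PF}_p(A)$ unconditionally: every $s\in S_p\setminus\{\ell\}$ satisfies $s>\ell$, so $g+s-\ell>g$ and hence $g+s-\ell\in S_p$. Next, for any $x\in G_p$ distinct from both $g$ and $(g+\ell)/2$, the $p$-pseudo-symmetric hypothesis supplies $g+\ell-x\in S_p$; since $x<g$ forces $g+\ell-x>\ell$, the element $s^{\ast}:=g+\ell-x$ lies in $S_p\setminus\{\ell\}$ and satisfies $x+s^{\ast}-\ell=g\in G_p$, so $x\notin{\rm PF}_p$. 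This already covers the case $(g+\ell)/2\in S_p$, where $(g+\ell)/2$ is trivially not pseudo-Frobenius, giving ${\rm PF}_p=\{g\}$. For the case $(g+\ell)/2\in G_p$, I still need to certify $(g+\ell)/2\in{\rm PF}_p$. To do so, I invoke Theorem~\ref{th:ps-sym-ap}(ii) with $a=\min A$: the $j=0$ relation forces $m_{(g+\ell)/2}^{(p)}=(g+\ell)/2+a$, while for $j>0$ the identities $m_{(g+\ell)/2+j}^{(p)}+m_{(g+\ell)/2-j}^{(p)}=g+\ell+a$ pair the remaining Ap\'ery representatives. Decomposing a generic $s\in S_p\setminus\{\ell\}$ as $s=m_i^{(p)}+\alpha a$ with $\alpha\geq 0$ and tracking the residue class of $(g+\ell)/2+s-\ell$ modulo $a$, these pairings then imply that $(g+\ell)/2+s-\ell$ meets or exceeds the Ap\'ery minimum in its residue class, hence lies in $S_p$.

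For the converse (ii)$\Rightarrow$(i), I argue by contradiction. If pseudo-symmetry failed, there would exist $x\in G_p$ with $x\ne(g+\ell)/2$ and $g+\ell-x\in G_p$. Taking the larger of $x$ and $g+\ell-x$ (both in $G_p$ and distinct from $(g+\ell)/2$) and again using the Ap\'ery decomposition of an arbitrary $s\in S_p\setminus\{\ell\}$, I verify the pseudo-Frobenius test for this larger element, exploiting the fact that its would-be pairing partner in the Ap\'ery set lies strictly beyond the relevant minimum; this produces a PF element outside the prescribed set in (ii), a contradiction.

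The main obstacle is the case $(g+\ell)/2\in G_p$ in (i)$\Rightarrow$(ii): actually verifying $(g+\ell)/2\in{\rm PF}_p$. The delicate point is that for $s\in S_p$ close to $\ell$ the shift $s-\ell$ is small, and $(g+\ell)/2+s-\ell$ may land in a residue class mod $a$ whose Ap\'ery minimum is not automatically dominated; the pairing identities from Theorem~\ref{th:ps-sym-ap}(ii) must be applied carefully, in tandem with the implicit parity constraint (that $g+\ell$ is even, forcing $(g+\ell)/2\in\mathbb Z$), to cover all residues uniformly.
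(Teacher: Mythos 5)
Your containment step in (i)$\Rightarrow$(ii) is fine: $g:=g_p(A)\in{\rm PF}_p(A)$ always, and under $p$-pseudo-symmetry every gap other than $g$ and $x_0:=\bigl(g+\ell_0(p)\bigr)/2$ fails the pseudo-Frobenius test, which also settles the case $x_0\in S_p$. But the step you yourself flag as the main obstacle --- certifying $x_0\in{\rm PF}_p(A)$ when $x_0\in G_p(A)$ --- is not merely delicate: with the paper's definition of ${\rm PF}_p$ it is false for $p\ge 1$, so no Ap\'ery-pairing bookkeeping can close it. The classical proof uses closure of $S$ under addition (if $g/2+s\notin S$, pseudo-symmetry gives $g/2-s\in S$, and adding $s$ puts $g/2$ in $S$, a contradiction); here the test involves the shifted sum $x_0+s-\ell_0(p)$, and the same manoeuvre only yields $x_0+\ell_0(p)-s\in S_p$, hence $x_0+\ell_0(p)\in S_p$, which is no contradiction, because $S_p$ is not closed under adding $s-\ell_0(p)$. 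Concretely, the paper's own example defeats the claim: for $A=\{6,7,17,28\}$, $p=17$, $S_{17}=\{97,98,102,\mapsto\}$ is $17$-pseudo-symmetric with $g=101$, $\ell_0(17)=97$, $x_0=99\in G_{17}$, yet $99+98-97=100\notin S_{17}$, so $99\notin{\rm PF}_{17}$; in fact ${\rm PF}_{17}=\{101\}$ and $t_{17}=1$. The failure is caused by elements $s$ with small shift $s-\ell_0(p)$ (here $s-\ell_0(p)=1$), which the relations of Theorem~\ref{th:ps-sym-ap}(ii) for the Ap\'ery minima $m_i^{(p)}$ do not control; so the residue-class estimate you sketch cannot cover all $s$.

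Two further steps are also gaps. The claim that (ii) and (iii) are equivalent by counting only gives (ii)$\Rightarrow$(iii) and the $t_p=1$ half of the converse: knowing $t_p(A)=2$ does not identify the second pseudo-Frobenius number with $x_0$. Already for $p=0$ the semigroup $\ang{6,7,11}$ has $g=16$, $g/2=8\in G$, ${\rm PF}=\{15,16\}$, so $t=2$, yet it is not pseudo-symmetric; hence (iii)$\Rightarrow$(i) cannot be dismissed as immediate (and is in fact problematic as stated). In your (ii)$\Rightarrow$(i) argument you propose to verify the pseudo-Frobenius test for the larger of $x$ and $g+\ell_0(p)-x$; that element is in general not maximal for $\le_{S}$ and need not be pseudo-Frobenius, so no contradiction with (ii) is produced this way. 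The classical route instead dominates each of the two gaps by a maximal gap, identifies maximal gaps with pseudo-Frobenius numbers (Proposition~\ref{prp:pf-max}), and derives $x-x_0\in S_p$ and $x_0-x\in S_p$ simultaneously; but note that in the $p$-setting the inclusion of maximal gaps into ${\rm PF}_p$ is itself delicate (in the $p=17$ example, $99$ and $100$ are $\le_S$-maximal gaps not in ${\rm PF}_{17}$). Finally, be aware that the paper states this corollary without proof, and in view of the example above the statement as written appears to require either a modified (unshifted) notion of ${\rm PF}_p$ or additional hypotheses; the obstacles you met are symptoms of that, not technicalities to be engineered around.
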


For $a,b\in\mathbb Z$, define a partial order relation $a\le_{S_p} b$ (or $a\le_{S} b$ for short) as $b-a\in S_p$. The set of $p$-pseudo-Frobenius numbers ${\rm PF}_p(A)$ can be determined with this order relation in terms of the $p$-maximal gaps.

\begin{Prop}  
Let $S_p(A)$ be a $p$-numerical semigroup, which is $p$-generated from $A$. Then we have 
$$
{\rm PF}_p(A)={\rm Maximals}_{\le_S}(G_p)\,. 
$$ 
\label{prp:pf-max} 
\end{Prop}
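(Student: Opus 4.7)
The plan is to prove the two set-inclusions separately, adapting the standard $p=0$ argument in which the pseudo-Frobenius numbers of a numerical semigroup are exactly the maximal elements of the set of gaps under divisibility. The only real bookkeeping twist here is that the definition of ${\rm PF}_p(A)$ incorporates a translation by $\ell_0(p)$, so I will read the relation $x<_S y$ concretely as ``$y=x+s-\ell_0(p)$ for some $s\in S_p\setminus\{\ell_0(p)\}$''; this coincides with the classical reading of $\le_S$ in the case $p=0$ because $\ell_0(0)=0$.

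First I would handle ${\rm PF}_p(A)\subseteq{\rm Maximals}_{\le_S}(G_p)$. Starting from $x\in{\rm PF}_p(A)$, I would first verify $x\in G_p$: applying the defining condition with any $s\in S_p\setminus\{\ell_0(p)\}$ yields $x+s-\ell_0(p)\in S_p\subseteq\mathbb N_0$, which bounds $x$ from below and, together with $x\notin S_p$, places $x$ in $G_p$. Then, assuming for contradiction that $x$ is not maximal, I would pick $y\in G_p$ with $x<_S y$ and $y\ne x$, write $y=x+s-\ell_0(p)$ for some $s\in S_p\setminus\{\ell_0(p)\}$, and observe that the pseudo-Frobenius condition applied to this very $s$ puts $y$ into $S_p$, contradicting $y\in G_p$.

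Next I would prove the reverse inclusion. Take $x\in{\rm Maximals}_{\le_S}(G_p)$, so $x\in G_p$ and hence $x\notin S_p$. For each $s\in S_p\setminus\{\ell_0(p)\}$ form $y:=x+s-\ell_0(p)$; since $s>\ell_0(p)$ we have $y>x$, and by construction $x<_S y$. Maximality of $x$ prevents $y$ from lying in $G_p$, so $y\in S_p$, which is exactly the condition required to conclude $x\in{\rm PF}_p(A)$.

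The hard part will be the conceptual one of making the dictionary between ``$y-x\in S_p$'' in the order and ``$y-x=s-\ell_0(p)$ for some $s\in S_p\setminus\{\ell_0(p)\}$'' in the pseudo-Frobenius condition absolutely precise; once that is laid out, both inclusions are essentially one-line applications of the defining property and recover the classical statement transparently. A subsidiary point I would write out carefully is that every element of ${\rm PF}_p(A)$ is a non-negative integer, so that both sides of the claimed equality are genuinely subsets of $\mathbb N_0$.
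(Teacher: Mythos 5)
Your argument has the same two-inclusion skeleton as the paper's proof, but it rests on a different reading of the order: you take $x<_S y$ to mean $y=x+s-\ell_0(p)$ for some $s\in S_p\setminus\{\ell_0(p)\}$, i.e.\ $y-x\in \bigl(S_p-\ell_0(p)\bigr)\setminus\{0\}$, whereas the paper literally defines $a\le_S b$ by $b-a\in S_p$. This is not a cosmetic difference: for $p\ge 1$ the proposition is only true under your translated reading. For instance, with $A=\{6,17,28\}$, $p=5$, one has $\ell_0(5)=130$, $g_5(A)=179$, ${\rm PF}_5(A)=\{163,179\}$; under the literal relation $y-x\in S_5$ every gap $x$ with $x>49$ is vacuously maximal (any $y$ with $y-x\in S_5$ exceeds $179$), so $162$ would be maximal although $162\notin{\rm PF}_5(A)$ (indeed $162+147-130=179\notin S_5$). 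The paper's own worked example after the proposition checks maximality via differences lying in $S_5(A)-\ell_0(5)$, i.e.\ it uses your reading, and the paper's proof of the inclusion ${\rm Maximals}_{\le_S}(G_p)\subseteq{\rm PF}_p(A)$ contains the step ``as $\ell_0(p)\in S_p$, $x+s\notin S_p$'' (deduced from $x+s-\ell_0(p)\notin S_p$), which does not follow and is exactly where the two readings diverge; your choice of witness $y=x+s-\ell_0(p)$ avoids this and makes that direction sound. In the other direction the paper argues with $s:=y-x\in S_p$ and implicitly uses closure ($s+\ell_0(p)\in S_p\setminus\{\ell_0(p)\}$) to invoke the pseudo-Frobenius condition; your dictionary absorbs that translation up front, and the two arguments are then equivalent. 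So: your proof is correct for the intended (translated) order, and you should state explicitly that you are using that relation rather than the literal $b-a\in S_p$, since otherwise the statement you prove is not the one defined two lines above the proposition.

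One smaller point: your justification that ${\rm PF}_p(A)\subseteq\mathbb N_0$ (hence $\subseteq G_p$) is too thin as sketched. Applying the defining condition with a single $s$ only gives $x\ge 2\ell_0(p)-s$, which for $p=0$ (where $\ell_0(0)=0$ and the smallest admissible $s$ is $a_1$) yields only $x\ge -a_1$. A clean fix: if $x<0$, take $s=g_p(A)+\ell_0(p)-x$; then $s>g_p(A)$, so $s\in S_p\setminus\{\ell_0(p)\}$, while $x+s-\ell_0(p)=g_p(A)\notin S_p$, contradicting $x\in{\rm PF}_p(A)$. With that inserted, both inclusions are complete; the paper itself glosses over this membership issue entirely, so here you are being more careful than the source.
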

\begin{proof} 
\noindent  
[$x\in {\rm LHS}\Longrightarrow x\in{\rm RHS}$] 
If $x\in{\rm PF}_p(A)\subset S_p$, then $x\not\in S_p(A)$ and $x+s-\ell_0(p)\in S_p(A)$ for all $s\in S_p(A)\backslash\{\ell_0(p)\}$. If $x\not\in{\rm Maximals}_{\le_S}(G_p)$, then there exists $y\in G_p$ such that $x\le_S y$. 
If $x\ne y$, then $s:=y-x\in S_p$, so by $\ell_0(p)\in S_p$, we get $y=x+s\in S_p$, which is a contradiction. 

\noindent 
[$x\in{\rm RHS}\Longrightarrow x\in {\rm LHS}$] 
If $x\in{\rm Maximals}_{\le_S}(G_p)$ and $x\not\in{\rm PF}_p(A)$, then $x\not\in S_p$ and there exists $s\in S_p\backslash\{\ell_0(p)\}$ such that $x+s-\ell_0(p)\not\in S_p$. As $\ell_0(p)\in S_p$, $x+s\not\in S_p$. So, $x\le_S x+s$ with $s=(x+s)-x\in S_p$, yielding a contradiction to the maximality of $x$.    
\end{proof}

The set of $p$-pseudo-Frobenius numbers ${\rm PF}_p(A)$ can be also determined in terms of the $p$-Ap\'ery set. 

\begin{Prop}  
Let $S_p(A)$ be a $p$-numerical semigroup, which is $p$-generated from $A$ with $a=\min(A)$. Then for $n\in S_p$ we have 
$$
{\rm PF}_p(A)=\{w-a|w\in{\rm Maximals}_{\le_S}{\rm Ape}_p(A;a)\}\,. 
$$ 
\label{prp:pf-ape} 
\end{Prop}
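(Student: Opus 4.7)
The plan is to combine Proposition~\ref{prp:pf-max}, which identifies ${\rm PF}_p(A)$ with ${\rm Maximals}_{\le_S}(G_p)$, together with two key facts about the $p$-Ap\'ery set: (i) ${\rm Ape}_p(A;a)$ is a complete system of residues modulo $a$, with $m_j^{(p)}$ the smallest element of $S_p$ in its class; and (ii) $S_p$ is closed under the translation $s\mapsto s+a$, since appending one more summand $a$ to any of the $>p$ representations of $s$ produces a representation of $s+a$, forcing $d(s+a)>p$. In particular $\ell_0(p)+a\in S_p$ even when $a\notin S_p$ (which is the case whenever $p\ge 1$).

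I would then show that $\phi\colon w\mapsto w-a$ sends ${\rm Maximals}_{\le_S}{\rm Ape}_p(A;a)$ bijectively onto ${\rm Maximals}_{\le_S}(G_p)$. Well-definedness requires $w-a\in G_p$ for every maximal $w$: the Ap\'ery condition gives $w-a\notin S_p$, while $w-a\ge 0$ follows from $\ell_0(p)>a$ when $p\ge 1$, and from the observation that $0\in{\rm Ape}_0$ is never maximal when $p=0$ (since $0\le_S w'$ for every $w'\in{\rm Ape}_0$ when $a\ge 2$).

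For the inclusion $\supseteq$, take $w$ maximal in ${\rm Ape}_p$ and suppose for contradiction that some $y\in G_p$ satisfies $(w-a)<_S y$; set $t:=y-(w-a)\in S_p$ with $t\ne 0$. By Lemma~\ref{lem:spanumsg}, $y+a=w+t\in S_p$, so I let $w'\in{\rm Ape}_p(A;a)$ be the unique Ap\'ery element in the residue class of $y+a$ and write $y+a=w'+ra$ with $r\ge 0$. If $r=0$ then $w'=y+a$, so $w'-w=t\in S_p\setminus\{0\}$ with $w'\ne w$ (because $y\ne w-a$), contradicting maximality of $w$ in ${\rm Ape}_p$; if $r\ge 1$ then $y=w'+(r-1)a\in S_p$ by fact (ii), contradicting $y\in G_p$. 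For the inclusion $\subseteq$, take $x\in{\rm PF}_p(A)={\rm Maximals}_{\le_S}(G_p)$; apply the pseudo-Frobenius condition with $s=\ell_0(p)+a\in S_p\setminus\{\ell_0(p)\}$ to obtain $x+a\in S_p$, so $w:=x+a\in{\rm Ape}_p(A;a)$. To verify maximality of $w$, assume $w\le_S w'$ with $w'\in{\rm Ape}_p$ and $w'\ne w$, set $t:=w'-w\in S_p\setminus\{0\}$, and apply the pseudo-Frobenius condition with $s=t+\ell_0(p)\in S_p\setminus\{\ell_0(p)\}$ to conclude $x+t=w'-a\in S_p$, contradicting $w'\in{\rm Ape}_p(A;a)$.

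The main obstacle I expect is that for $p\ge 1$ the element $a$ itself is absent from $S_p$, so the classical $p=0$ argument (which directly uses $a\in S_0$ as a test element in the pseudo-Frobenius condition and to translate Ap\'ery elements) cannot be copied verbatim. The workaround, used throughout the steps above, is to systematically replace $a$ by the nearby element $\ell_0(p)+a$ whenever instantiating the pseudo-Frobenius condition, and to exploit fact (ii), which keeps $S_p$ closed under $+a$ regardless of whether $a$ belongs to $S_p$.
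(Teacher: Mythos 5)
Your first inclusion, ${\rm PF}_p(A)\subseteq\{w-a\mid w\ \text{maximal}\}$, is essentially the paper's own argument and is fine: testing the $p$-pseudo-Frobenius condition with $s=\ell_0(p)+a$ gives $x+a\in{\rm Ape}_p(A;a)$, and testing with $s=t+\ell_0(p)$ excludes any $w'$ lying above $x+a$. Note that this instantiation even works verbatim if comparability of $w$ and $w'$ is taken to mean $w'-w\in S_p(A)-\ell_0(p)$ rather than $w'-w\in S_p(A)$, which will matter below.

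The reverse inclusion is where there is a genuine gap, and it is tied to what $\le_S$ has to mean for the statement to be true. With the literal definition in the paper ($x\le_S y$ iff $y-x\in S_p$), no two elements of ${\rm Ape}_5(6,17,28;6)=\{130,147,152,168,169,185\}$ are comparable when $p=5$ (all differences are smaller than $\ell_0(5)=130$), so every element is maximal and the right-hand side would be $\{124,141,146,162,163,179\}$, whereas ${\rm PF}_5(6,17,28)=\{163,179\}$; Proposition \ref{prp:pf-max}, on which your argument leans, fails under the same literal reading (e.g.\ $124$ is then a maximal gap, yet $124+147-130=141\notin S_5$, so $124\notin{\rm PF}_5$). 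Your reduction ``maximal in ${\rm Ape}_p$ implies maximal in $G_p$'' is internally correct under that reading, but it merely transfers the falsity into the cited proposition. The reading under which both propositions are true, and the one the paper actually uses in its worked example, is $x\le_S y$ iff $y-x\in S_p(A)-\ell_0(p)$. Under that reading your key step collapses: from $(w-a)<_S y$ you only get $t+\ell_0(p)\in S_p$ for $t=y-(w-a)$, not $t\in S_p$, so Lemma \ref{lem:spanumsg} does not give $y+a=w+t\in S_p$; and the implication ``$w\in S_p$ and $t+\ell_0(p)\in S_p$ imply $w+t\in S_p$'' is false in general (for $A=\{4,5,11\}$, $p=1$ one has $\ell_0(1)=15$, $16\in S_1$ and $1+15=16\in S_1$, yet $16+1=17\notin S_1$). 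Hence the inclusion $\{w-a\mid w\ \text{maximal}\}\subseteq{\rm PF}_p(A)$ is not established. In fairness, the same difficulty sits unresolved in the paper's own proof of this direction (the deduction of $w+s-a\notin S_p$ from $w-a+s-\ell_0(p)\notin S_p$ is not justified as written), so you have hit a genuinely delicate point of the paper; but as it stands your argument for this half does not go through.
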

\begin{proof} 
\noindent 
[$x\in {\rm LHS}\Longrightarrow x\in{\rm RHS}$] 
If $x\in{\rm PF}_p(A)$, then $x\not\in S_p$ and $x+a\in S_p$, where $a+\ell_0(p):=s\in S_p\backslash\{\ell_0(p)\}$. Thus, $x+a\in{\rm Ape}_p(A;a)$. 
If $x+a$ is not maximal with respect to $\le_S$, there exists $w\in{\rm Ape}_p(A;a)$ such that $x+a\le_S w$. So, $w-a\not\in S_p$ and $s:=w-x-a\in S_p$.  
However, by $x\in{\rm PF}_p(A)$, $x\not\in S_p$ and $x+s-\ell_0(p)\in S_p$ for all $s\in S_p\backslash\{\ell_0(p)\}$. As $\ell_0(p)\in S_p$, $x+s\in S_p$, which is a contradiction.

\noindent 
[$x\in{\rm RHS}\Longrightarrow x\in {\rm LHS}$] 
If $w\in{\rm Ape}_p(A;a)$, then $w-a\not\in S_p$. 
If $w-a\not\in{\rm PF}_p(A)$, then $w-a\not\in S_p$ and there exists $s\in S_p\backslash\{\ell_0(p)\}$ such that $w-a+s-\ell_0(p)\not\in S_p$. 
As $\ell_0(p)\in S_p$, $w+s-a\not\in S_p$. 
As $w,s\in S_p$, $w+s\in S_p$. Hence, $w+s\in{\rm Ape}_p(A;a)$.   
Because $(w+s)-w=s\in S_p$, $w\le_S w+s$, which is a contradiction to the maximality of $x$.    
\end{proof}

\noindent 
{\bf Example.} 
When $A=\{6,17,28\}$ and $p=5$, we saw that ${\rm PF}_5(6,17,28)=\{163,179\}$, and $t_5(6,17,28)=2$. Hence, by Corollary \ref{cor:p-sym}, $S_5(A)$ is not $5$-symmetric though $g_p(A)=179\not\equiv \ell_0(p)=130\pmod 2$. Since $\bigl(g_p(A)+\ell_0(p)\bigr)/2\not\in\mathbb Z$, by Corollary \ref{cor:p-pseudo-sym}, $S_5(A)$ is not $5$-pseudo-symmetric either. 

Since 
\begin{multline*}
G_5(A)=\{\underbrace{0,1,\dots,129},\underbrace{131,\dots,135},\underbrace{137,\dots,141},\underbrace{143,\dots,146},\\
149,150,151,155,156,157,161,162,163,167,173,179\}\,,
\end{multline*}
the largest elements of each residue in $G_5(A)$ are $126\equiv 4$, $141\equiv 3$, $146\equiv 2$, $162\equiv 0$, $163\equiv 1$, $179\equiv 5\pmod 6$, respectively. However, $126,141,146,162$ are not maximal because $126+17=143$, $141+22=163$, $146+17=163$ and $162+17=179$ belong to $G_5(A)$ with $17,22\in S_5(A)-\ell_0(5)$. Therefore, by Proposition \ref{prp:pf-max}, we have ${\rm PF}_5(6,17,28)=\{163,179\}$.  

Since ${\rm Ape}_5(A)=\{130=m_4^{(5)},147=m_3^{(5)},152=m_2^{(5)},168=m_0^{(5)},169=m_1^{(5)},185=m_5^{(5)}\}$, we get ${\rm Ape}_5(A)-130=\{0,17,22,38,39,55\}$. 
Since $17+22=39$ and $38+17=55$, we see that $17,22,38$ are not maximal with respect to $\le_S$. Therefore, by Proposition \ref{prp:pf-ape}, we have ${\rm PF}_5(6,17,28)=\{169-6,185-6\}=\{163,179\}$.

At the end of this subsection, we mention a partially corresponding result to Theorem \ref{th:valuation}.  

\begin{theorem}
If $S_p(A)$ is $p$-pseudo-symmetric, then $2d_1+1=d_2$.
\end{theorem}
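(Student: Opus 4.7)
The plan is to mimic the proof of Theorem \ref{th:valuation}, replacing the symmetric input with the pseudo-symmetric one from Theorem \ref{th:ps-sym-ap}(iii). In the case where $S_p(A)$ is pseudo-symmetric but not already $p$-symmetric, the midpoint $(g_p(A)+\ell_0(p))/2$ is an integer lying in $G_p(A)$, and Theorem \ref{th:ps-sym-ap}(iii) then yields $n_p(A) = (g_p(A)+\ell_0(p))/2 + 1$.

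First, I would translate this into a count for $d_3$. Using that $\{0,1,\ldots,\ell_0(p)-1\}\subseteq G_p(A)$ and that every integer greater than $g_p(A)$ lies in $S_p(A)$, one obtains by a direct count that $d_3 = \ell_0(p)+g_p(A)+1-n_p(A)$. Substituting the pseudo-symmetric value of $n_p(A)$ gives $d_3 = (g_p(A)+\ell_0(p))/2$. A more geometric way to see this is via the involution $x \mapsto \ell_0(p)+g_p(A)-x$ on $\{0,1,\ldots,\ell_0(p)+g_p(A)\}$: by pseudo-symmetry each non-midpoint pair contributes exactly one element to $S_p(A)$, while the midpoint itself sits in $G_p(A)$ and contributes none.

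Next, I would reuse the two chain arguments from the proof of Theorem \ref{th:valuation}. The maximal ideal chain inside $R_0/f$ pins down $d_1$ in terms of $d_3$, while the maximal chain of $R_0$-submodules of $\bar{R}_0/f$ gives $d_2 = \ell_0(p)+g_p(A)+1$. Plugging in the pseudo-symmetric value of $d_3$ then produces $2d_1+1 = \ell_0(p)+g_p(A)+1 = d_2$, as required.

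The main subtlety is the careful bookkeeping of where the midpoint lives. In the symmetric setting of Theorem \ref{th:valuation} the midpoint is either non-integral or sits in $S_p(A)$, and thus is absorbed cleanly into the composition chain, producing the relation $2d_1 = d_2$ (modulo the analogous $\pm 1$ adjustment present there). In the strictly pseudo-symmetric setting the midpoint instead belongs to $G_p(A)$, which effectively removes one contribution from the chain in $R_0/f$ relative to the symmetric bookkeeping, and this single-unit shift is exactly what produces the extra $+1$ on the left-hand side of the stated identity. Apart from this one adjustment, the argument is a direct parallel of the previous proof; the non-equivalence (an implication rather than an ``if and only if'') reflects the fact that the converse would force a specific sub-case of the pseudo-symmetric dichotomy.
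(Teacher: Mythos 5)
Your strategy is the same as the paper's: feed Theorem \ref{th:ps-sym-ap}(iii) (midpoint in $G_p(A)$, so $n_p(A)=(g_p(A)+\ell_0(p))/2+1$) into a count of $d_3$, then reuse the two chains from the proof of Theorem \ref{th:valuation}. But your arithmetic does not close. You compute $d_3=(g_p(A)+\ell_0(p))/2$, and the ideal chain $R_0\supset R_1\supset\cdots\supset R_{d_3}\supset f$ --- which is all that ``reusing the chain argument'' gives --- yields $d_1=d_3+1$, exactly as in the paper. Combining this with $d_2=\ell_0(p)+g_p(A)+1$ produces $2d_1+1=g_p(A)+\ell_0(p)+3=d_2+2$, not $d_2$. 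Your concluding sentence, asserting that plugging in gives $2d_1+1=\ell_0(p)+g_p(A)+1$, tacitly uses $d_1=d_3$, which contradicts the chain count; the informal remark about a ``single-unit shift'' does not repair this.

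The source of the mismatch is an off-by-one in the window defining $d_3$. The paper's proof works with $2d_3=\ell_0(p)+g_p(A)-2$, i.e.\ one less than your value: in effect the element $g_p(A)+\ell_0(p)=c_p+\ell_0(p)-1$, which lies in $S_p(A)$, is not counted (the same convention appears tacitly in the proof of Theorem \ref{th:valuation}, where the top block of the window is taken to be $\{g_p+1,\dots,\ell_0(p)+g_p-1\}$). With that convention, $d_1=d_3+1$ and $d_2=\ell_0(p)+g_p(A)+1$ do give $2(d_1-1)=d_2-3$, i.e.\ $2d_1+1=d_2$. To complete your argument you must either adopt and justify this count for $d_3$ (so that in the genuinely pseudo-symmetric case $d_3=(g_p(A)+\ell_0(p))/2-1$), or else state and prove precisely the relation between $d_1$ and the quantity you actually computed; as written, the claimed identity does not follow from the numbers you display. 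Your explicit restriction to the case $(g_p(A)+\ell_0(p))/2\in G_p(A)$ is fine and matches what the paper's proof implicitly assumes.
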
 
\begin{proof}
If $S_p(A)$ is $p$-pseudo-symmetric, then we have $2d_3=\ell_0(p)+g_p-2$.\\
Again, consider the maximal ideal chain $R_0\supset R_1\supset R_2\dots \supset R_{d_3}\supset f$ as in the proof of Theorem \ref{th:valuation}. Thus, we get $d_1=d_3+1$. And consider the $R_0$-submodule chain of $\bar{R}_0/f$: $\bar{R}_0=b_0\supset b_1\supset b_2\dots \supset b_{\ell_0(p)+g_p+1}=f$. We have $d_2=\ell_0(p)+g_p+1$. 
Hence, if $S_p(A)$ is $p$-pseudo-symmetric, then $2(d_1-1)=d_2-3$. 
\end{proof}

\subsection{$p$-irreducible numerical semigroup}  

A numerical semigroup $S$ is irreducible if it cannot be expressed as the intersection of two proper oversemigroups.  
A $p$-numerical semigroup $S_p$, which is $p$-generated from $A$, is called {\it $p$-irreducible} if it is either $p$-symmetric or $p$-pseudo-symmetric.  It is known that every numerical semigroup can be expressed as a finite intersection of irreducible numerical semigroups.  

By Theorem \ref{th:2-psym}, we have the $p$-irreducible property for two variables.  

\begin{Cor}  
For any non-negative integer $p$, $S_p(a,b)$ with $\gcd(a,b)=1$ is $p$-irreducible.  
\label{cor:2-psym}
\end{Cor}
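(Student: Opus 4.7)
The plan is essentially immediate from the definitions and Theorem \ref{th:2-psym}. By definition, a $p$-numerical semigroup is $p$-irreducible if it is either $p$-symmetric or $p$-pseudo-symmetric. Thus, to conclude $p$-irreducibility for $S_p(a,b)$, it suffices to verify one of these two properties.

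Theorem \ref{th:2-psym} already shows that for $\gcd(a,b)=1$ and any non-negative integer $p$, the semigroup $S_p(a,b)$ is $p$-symmetric. This was obtained by combining Lemma \ref{lem:standard-form} (which identifies $\ell_0(p)=pab$ as the least element of $S_p(a,b)$, since the standard form forces $x_0\ge pb$), together with the closed formulas (\ref{eq:g2}) and (\ref{eq:n2}) for $g_p(a,b)$ and $n_p(a,b)$, and the symmetry criterion of Proposition \ref{prp:sym-n-g}, namely $n_p(A)=(g_p(A)+\ell_0(p)+1)/2$.

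Therefore the corollary is obtained in one line: $S_p(a,b)$ is $p$-symmetric by Theorem \ref{th:2-psym}, hence $p$-irreducible by the very definition of $p$-irreducibility. There is no real obstacle to overcome here; the content of the corollary is entirely carried by the theorem it cites, and the role of this statement is only to record that the two-variable case falls into the $p$-irreducible class, in parallel with the classical fact that $\langle a,b\rangle$ is always irreducible in the $p=0$ setting.
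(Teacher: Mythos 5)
Your proposal is correct and matches the paper's own (essentially one-line) justification: the corollary is stated immediately after Theorem \ref{th:2-psym}, and follows at once because $S_p(a,b)$ is $p$-symmetric and $p$-irreducibility is defined as being $p$-symmetric or $p$-pseudo-symmetric. Nothing further is needed.
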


Every $p$-numerical semigroup can be also expressed as a finite intersection of irreducible numerical semigroups (\cite{ADG20}).  

\begin{Prop}  
For a non-negative integer $p$, let $S_p$ be a $p$-numerical semigroup. Then, there exist finitely many irreducible numerical semigroups $\mathcal S_1,\dots,\mathcal S_r$ such that $S_p=\mathcal S_1\cup\dots\cup\mathcal S_r$. 
\label{prp:irredu-decomp} 
\end{Prop}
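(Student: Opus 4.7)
My plan is to reduce the claim to the classical irreducible decomposition theorem for numerical semigroups. Two preliminary observations: (a) by Lemma \ref{lem:spanumsg}, $S_p^{(0)} := S_p \cup \{0\}$ is a numerical semigroup in the strict sense, and (b) the statement as worded (a union of irreducible numerical semigroups $\mathcal S_1,\dots,\mathcal S_r$) cannot be taken literally for $p\ge 1$, since then $0\notin S_p$ while $0\in \mathcal S_1\cup\cdots\cup\mathcal S_r$ (every irreducible numerical semigroup contains $0$). Reading the statement in light of the preceding sentence (``Every $p$-numerical semigroup can be also expressed as a finite intersection of irreducible numerical semigroups'' \cite{ADG20}), I take the intended conclusion to be the intersection decomposition $S_p^{(0)} = \mathcal S_1\cap\cdots\cap\mathcal S_r$, which I now argue; $S_p$ is then recovered by removing $0$ from both sides when $p\ge 1$.

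The proof proceeds by strong induction on the genus $n(S_p^{(0)})$, which is finite by the bound $\#(\mathbb N_0\setminus S_p^{(0)})\le p\,a_1\cdots a_k$ appearing in the proof of Lemma \ref{lem:spanumsg}. If $S_p^{(0)}$ is irreducible, take $r=1$ and $\mathcal S_1 = S_p^{(0)}$. Otherwise, by the definition of irreducibility quoted in this subsection, there exist proper oversemigroups $T_1, T_2\supsetneq S_p^{(0)}$ with $T_1\cap T_2 = S_p^{(0)}$. Each $T_i$ is itself a numerical semigroup, and $n(T_i) < n(S_p^{(0)})$ since $T_i$ has strictly fewer gaps than $S_p^{(0)}$. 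The induction hypothesis applied to each $T_i$ yields a decomposition $T_i = \bigcap_{j}\mathcal S_{i,j}$ into irreducibles, and concatenating gives $S_p^{(0)} = T_1\cap T_2 = \bigcap_{i,j}\mathcal S_{i,j}$, as required.

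The main obstacle is terminological rather than technical: one must identify the correct numerical-semigroup counterpart of $S_p$ (namely $S_p^{(0)}$) in order to apply the classical theory at all, and then interpret the ``union'' in the conclusion as the ``intersection'' promised by the sentence immediately preceding the proposition. Once this is settled, the argument is an essentially direct invocation of the standard decomposition theorem, which can alternatively be cited verbatim from \cite{ADG20} or from \cite{RG09}; no new input beyond Lemma \ref{lem:spanumsg} is required.
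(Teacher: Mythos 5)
Your proposal is correct, but there is nothing in the paper to compare it against: the paper gives no proof of Proposition \ref{prp:irredu-decomp} at all, simply citing \cite{ADG20} in the sentence immediately preceding it. What you have written is precisely the standard genus-induction argument that the citation points to, so you have in effect filled in the omitted proof rather than diverged from it. Your two terminological corrections are both right and worth making explicit: the ``$\cup$'' in the statement must be read as ``$\cap$'' (the example closing the subsection says ``intersection'' while writing $\cup$, and the displayed sets $S(A_1)$, $S(A_2)$ there do intersect, not unite, to give $S_2(A)$), and for $p\ge 1$ one must pass to $S_p^{(0)}=S_p\cup\{0\}$ via Lemma \ref{lem:spanumsg} before the classical theory applies, since every numerical semigroup contains $0$. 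One small point of hygiene in your induction: the oversemigroups $T_1,T_2$ produced by the failure of irreducibility need not themselves be of the form $S_p^{(0)}(A)$ for any $p$ and $A$, so the statement you induct on must be the general one --- ``every numerical semigroup of genus $n$ is a finite intersection of irreducibles'' --- rather than its restriction to $p$-numerical semigroups; as written you apply ``the induction hypothesis'' to $T_i$ without having widened the claim. This costs nothing (the general claim is exactly the classical theorem and your argument proves it), but the inductive statement should be phrased accordingly. With that adjustment the proof is complete and correct.
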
 

\noindent 
{\it Remark.}  
It has not been known that for any fixed non-negative integer $p$, a $p$-numerical semigroup can be expressed as an intersection of $p$-irreducible numerical semigroups.

\noindent 
{\bf Example.} 
For $A=\{5,9,16\}$, we see that $S_2(A)=\{41,45,46,48,50,\mapsto\}$, which is neither ($2$-)symmetric nor ($2$-)pseudo-symmetric. But it can be expressed as an intersection of two ($0-$)numerical semigroups: $S_2(A)=S(A_1)\cup\mathcal S(A_2)$ with $A_1=\{41,43,45,46,48,50,\mapsto\}$ and $A_2=\{41,45,46,47,48,50,\mapsto\}$. Here both $S(A_1)$ and $S(A_2)$ are ($0-$)pseudo-symmetric. In addition, these $0$-numerical semigroups are given by canonical forms: 
\begin{align*}  
S(A_1)&=\langle{41,43,45,46,48,\underbrace{50,\dots,81},83,85\rangle}\,,\\
S(A_2)&=\langle{41,45,46,47,48,\underbrace{50,\dots,81},83,84,85\rangle}\,. 
\end{align*}

\section*{Statements and Declarations} 

The authors have no conflicts of interest directly relevant to the content of this article. Funding information is not available. 



\end{document}